\documentclass[12pt,amsfonts]{article}
\setlength{\textwidth}{1.2\textwidth}
\setlength{\textheight}{1.25\textheight}
\setlength{\oddsidemargin}{-5pt}
\setlength{\topmargin}{-50pt}

\usepackage{latexsym}
\usepackage{amsfonts,amssymb}

\newenvironment{proof}[1][Proof]{\noindent \textbf{#1.} }{$\Box$}
\usepackage{mathrsfs}
\usepackage{indentfirst}
\usepackage[dvipsnames,usenames]{color}
\usepackage{amsmath}
\usepackage{amssymb}
\usepackage{graphicx}
\usepackage{times}
\usepackage{graphics,color}
\newtheorem{theorem}{Theorem}[section]
\newtheorem{lemma}[theorem]{Lemma}
\newtheorem{remark}[theorem]{Remark}
\newtheorem{corollary}[theorem]{Corollary}
\newtheorem{proposition}[theorem]{Proposition}

\newtheorem{definition}[theorem]{Definition}

\newcommand{\bd}[1]{\begin{definition}\label{#1}\rm}
\newcommand{\ed}{\end{definition}}
\newcommand{\bt}[1]{\begin{theorem}\label{#1}}
\newcommand{\et}{\end{theorem}}
\newcommand{\bprop}[1]{\begin{proposition}\label{#1}}
\newcommand{\eprop}{\end{proposition}}
\newcommand{\bcor}[1]{\begin{corollary}\label{#1}}
\newcommand{\ecor}{\end{corollary}}

\begin{document}
\title{{A rotor configuration in $\mathbb{Z}^d$ where Schramm's bound of escape rates attains}}
\par
\author{Daiwei He\footnote{Email:dw.hefudan@gmail.com. Address:School of Mathematical Sciences, Fudan University, Shanghai, China}}
\date{}
\maketitle
{\footnotesize
\noindent
\begin{quote}
{\bf Abstract}
Rotor walk is deterministic counterpart of random walk on graphs. We study that under a certain initial configuration in $\mathbb{Z}^d$, $n$ particles perform rotor walks from the origin consecutively. They would stop if they hit the origin or $\infty$. When the dimension $d \geq 3$, the escape rate exists and it attains the upper bound of Oded Schramm \cite{10}. When the dimension $d=2$, the numbers of the particle escaping to $\infty$ are of order $n/\log n$. The limit of their quotient exist and also attains the upper bound of Florescu,Ganguly,Levine,Peres \cite{7} which equals to $\frac{\pi}{2}$. We use the results and the methods of the outer estimate for rotor-router aggregation in L.Levine and Y.Peres \cite{6}.
\noindent


\noindent
{\bf Keywords}
: rotor walk, random walk, rotor-router aggregation.
\end{quote}
}
\section{Introduction}
Rotor walk is a deterministic counterpart of random walk on graphs. It was first introduced in Priezzhev at al. \cite{8}. Its intuitive definition is as follows. We arrange a fixed cyclical order of its neighbors to each vertex of the graph and a rotor pointing to some neighbor of each vetex. A particle starts from a vertex of the graph. It moves to the neighbor of the vertex where the particle currently locates following the direction of the rotor. And then the rotor of the vertex shifts to the next neighbor of the cyclical order. We mainly focus on the rotor walk on $\mathbb{Z}^d$, Here is a formal definition of rotor walk on $\mathbb{Z}^d$.

\begin{definition}
$\mathcal{E}=\{\pm{e_1},\pm{e_2},\dots,\pm{e_d}\}$ is the set of the $2d$ cardinal directions of $\mathbb{Z}^d$ and
$\mathcal{C}$ is the set of cyclical orders of $\mathcal{E}$. $m: \mathbb{Z}^d \rightarrow \mathcal{C}$ and rotor configuration $\rho$ maps
$\mathbb{Z}^d$ to $\mathcal{E}$. We call a sequence $x_0,x_1, \dots \subseteq \mathbb{Z}^d$ is a rotor walk of initial rotor configuration $\rho$ if
there exists rotor configuration $\rho=\rho_0,\rho_1, \dots$ such that for all $n \geq 0$ \[x_{n+1}=x_n+\rho_n(x_n)\]
and \[\rho_{n+1}(x_n)=m(x_n)(\rho_n(x_n))\] and for $x\neq x_n$, $\rho_{n+1}=\rho_n$ where $m(x_n)$ is recognized as the permutation the cyclical order corresponds to.
\end{definition}

In our paper we assume $\forall x\in \mathbb{Z}^d$,$m(x)$ is independent of $x$. We denote $m(x)$ to be $m$.

In $\mathbb{Z}^d$ and the initial rotor configuration is $\rho$. A particle q perform rotor walk starting from the origin 0. There are two possible situations:

1. q return 0 eventually.

2. q does not return 0 and for all sites in $\mathbb{Z}^d$, q visits them only finite times.\\
As with the second situation, for $x \in \mathbb{Z}^d$, denote $d(x,0)$ to be the graph distance from $x$ to 0, namely the minimum number of edges of the path from $x$ to 0. We know that if q visited $x$ $(2d)^{d(x,0)}$ times, it must visit 0. Hence for all points in $\mathbb{Z}^d$ q visits them finite times and q would escape to infinity.

The particles in turn perform rotor walk from 0 means that the first particle performs rotor walk starting from 0 until meeting some stopping requirements(for example, hitting $\{0\} \cup \{\infty\}$) and the current rotor configuration is different from the initial configuration. Regarding the current configuration as the initial configuration, the second particle performs rotor walk from 0 until meeting some stopping requiements. The third particle's initial configuration is the configuration after the second particle finishes its rotor walk. Then the process goes on following the above rules.

If $n$ particles in turn perform rotor walk until either hitting 0 or escaping to infinity, denote the number of the particles escaping to infinity to be $I(\rho,n)$.

To measure the intensity of transience and recurrence of the initial configuration, consider the behavior of $I(\rho,n)/n$ when $n$ tends to infinity. Schramm \cite{10} proved for any initial configuration $\rho$,

\[\limsup_{n\to\infty}\frac{I(\rho,n)}{n}\leq \alpha_d\]
where $\alpha_d$ is the escaping probability of $d$-dimensional random walk.

Although the upper bound of the upper limit of $I(\rho,n)/n$ does not depend on initial configuration, the lower limit of $I(\rho,n)/n$ depends on initial configuration. In Omer Angel, Alexander E.Holroyd \cite{2}, they proved $\forall d \geq 2$, there exists an initial rotor configuration $\overline{ \rho}$ such that $I(\overline{ \rho},n) \equiv 0$. The method was introduced in Tulasi Ram Reddy A \cite{12}. Hence we know
\[\liminf_{n \to \infty}\frac{I(\overline{ \rho},n)}{n}=0\]

However, in Florescu,Ganguly,Levine,Peres \cite{7}, let $\tilde{\rho}(x) \equiv e_d$. \\
When $d=2$, for any initial configuration $\rho$,
\[\limsup_{n\to\infty}\frac{I(\rho,n)}{n/\log n}\leq \frac{\pi}{2},\quad \liminf_{n\to\infty}\frac{I(\tilde{\rho},n)}{n/\log n}>0.\]
When $d\geq3$,  \[\liminf_{n\to\infty}\frac{I(\tilde{\rho},n)}{n}>0.\]

A problem is that whether there exists an initial configuration $\rho^\prime$ in $\mathbb{Z}^d$ such that\\
when $d=2$, \[\lim_{n\to\infty}\frac{I(\rho^\prime,n)}{n/\log n}=\frac{\pi}{2}\]
and when $d\geq3$, \[\lim_{n\to\infty}\frac{I(\rho^\prime,n)}{n}=\alpha_d.\]

The definition of rotor walk on graphs is similar with rotor walk in $\mathbb{Z}^d$. For rotor walk on trees, Omer Angel,Alexander E. Holroyd \cite{1} gave a good answer to the above question. If $n$ particles in turn perform rotor walk from the root 0 of the tree $T$ until either returning 0 or escaping to infinity. For an initial configuration $\rho$ satisfying only finite number of vertices' initial rotor point to the root 0,
\[\lim_{n \to \infty}\frac{I(\rho, n)}{n}=\alpha\]
where $\alpha$ is the escaping probability of simple random walk on $T$.

In this paper we will find a rotor configuration attaining the upper bound of Schramm \cite{10} when $d \geq 3$ and attaining the upper bound $\frac{\pi}{2}$ in Florescu,Ganguly,Levine,Peres \cite{7} when $d=2$. In the following proof, denote $\rho_0(x)=+e_d$ if $x_d\geq0$ while $\rho_0(x)=-e_d$ if $x_d<0$ where $e_d$ is the $d$th-dimensional coordinate of $x$.

Our proof depends on an assumption of the cyclical order $m$. We know for any $e \in \mathcal{E}$, there exists $k$, $ 0 \leq k \leq 2d-1$ such that $m^{(k)}(e_d)=e$. Define a map $\eta: \mathcal{E} \rightarrow \{0,1,\dots,2d-1\}$ such that $\eta(e)=k$. Our assumption is that
\begin{equation}
\exists i, 1\leq i \leq d-1, \hbox{ such that } (\eta(e_i)-\eta(-e_d))(\eta(-e_i)-\eta(-e_d))<0.
\end{equation}
Intuitively, it means that $e_i$ and $-e_i$ could separate $e_d$ and $-e_d$ in the cyclical order $m$. For example, in $\mathbb{Z}^2$ the counterclockwise and clockwise rotation, and in $\mathbb{Z}^d$ where $d \geq 3$ counterclockwise and clockwise rotation after projecting the $2d$ directions onto a suitable 2-dimensional plane both satisfies the above assumption of $m$. Moreover, without loss of generality, in the following proof we assume that the direction satisfies (1) is $e_{d-1}$ unless other case specifically mentioned. Our first result is
\begin{theorem}
When $d=2$, \[\lim_{n\to\infty}\frac{I(\rho_0,n)}{n/\log n}=\frac{\pi}{2}.\]
\end{theorem}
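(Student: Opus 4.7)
The upper bound $\limsup_n I(\rho_0,n)/(n/\log n) \leq \pi/2$ is due to Florescu, Ganguly, Levine, and Peres \cite{7}, so the content of the theorem is the matching lower bound. The plan is to use the abelian property of rotor walks together with the outer-estimate comparison with simple random walk developed by Levine and Peres \cite{6}, combined with a geometric analysis of the ``escape-friendly'' structure of $\rho_0$.

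Choose an auxiliary radius $R_n$ growing with $n$ so that $\log R_n / \log n \to 1$. Using the abelian property, first run each of the $n$ particles until it either returns to the origin or exits the ball $B_{R_n}$, and let $J_{R_n}$ denote the number that exit. The associated odometer $u_n$ on $B_{R_n}$ is independent of the ordering. I then compare $u_n$ with the random-walk analogue $\widetilde{u}_n(x) = n\cdot G_{R_n}(0,x)$, where $G_{R_n}$ is the Green's function of simple random walk on $\mathbb{Z}^2$ killed upon returning to $0$ or exiting $B_{R_n}$, by adapting the outer-estimate argument of \cite{6} to a setting with both an interior sink and an exterior absorbing boundary. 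This should yield
\begin{equation*}
J_{R_n} \;=\; n\cdot P_0^{\mathrm{SRW}}\!\bigl(\tau_{\partial B_{R_n}} < \tau_0^{+}\bigr) \,+\, o\!\left(\tfrac{n}{\log n}\right) \;=\; (1+o(1))\,\frac{n\pi}{2\log R_n},
\end{equation*}
using the classical two-dimensional potential-theoretic estimate $P_0^{\mathrm{SRW}}(\tau_{\partial B_R}<\tau_0^+) \sim \pi/(2\log R)$.

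In the second phase, restart the $J_{R_n}$ particles at the sites on $\partial B_{R_n}$ where they exited, and follow them until they return to $0$ or escape. Crucially, the rotors outside $B_{R_n}$ are still in the initial state $\rho_0$. Using the $\pm e_2$ splitting of $\rho_0$ across the line $x_2 = 0$, together with the separation property of assumption (1) with $i = 1$, the aim is to show that a particle traversing a region of pristine rotors is strongly biased to escape: in the half-plane $x_2 > 0$ the first firing at an unvisited site sends the particle upward by $+e_2$, and the separating property of $\pm e_1$ prevents any immediate single rotor advance from redirecting the particle back across the $x_1$-axis. A layered argument over concentric annuli outside $B_{R_n}$, estimating the (small) number of ``bad'' rotor perturbations introduced by the passage of successive particles, then gives that all but $o(n/\log n)$ of these $J_{R_n}$ particles reach infinity.

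Putting the two phases together with $\log R_n = (1+o(1))\log n$ gives
\begin{equation*}
I(\rho_0,n) \;\geq\; J_{R_n} - o\!\left(\tfrac{n}{\log n}\right) \;=\; (1+o(1))\,\frac{n\pi}{2\log n},
\end{equation*}
and hence $\liminf_n I(\rho_0,n)/(n/\log n) \geq \pi/2$. The main obstacle is the second phase: once a restarted particle re-enters $B_{R_n}$ it may meet arbitrary rotor orientations left over from phase one and be driven back to the origin, so the ``escape-friendliness'' of $\rho_0$ outside $B_{R_n}$ has to be combined with a quantitative control on how rarely such re-entry can happen. Converting the geometric picture into an estimate that dominates this source of error, while simultaneously matching the boundary error of the outer-estimate comparison in phase one, is the technical heart of the argument and is where both assumption (1) and the machinery of \cite{6} are used most substantially.
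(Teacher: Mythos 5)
Your proposal assembles the right tools (abelian property, the Holroyd--Propp/Levine--Peres comparison with the Green's function, the special channeling structure of $\rho_0$), but the two-phase decomposition leaves the actual difficulty unresolved. Phase 2 requires that all but $o(n/\log n)$ of the $J_{R_n}\approx \frac{\pi n}{2\log R_n}$ particles restarted on $\partial B_{R_n}$ reach infinity, i.e.\ that essentially \emph{none} of them return to the origin. Since simple random walk in $\mathbb{Z}^2$ is recurrent, this has no probabilistic analogue and must come entirely from the deterministic geometry; the pristine-rotor argument only covers a particle while it stays outside the previously visited set, and (as you concede) says nothing about particles that re-enter $B_{R_n}$, where the rotors have been fired many times and may well route them back to $0$. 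Note that the visited region under $\rho_0$ extends out to radius of order $n\gg R_n$, so re-entry into heavily perturbed territory is not an exceptional event, and bounding the number of returners by $o(n/\log n)$ is equivalent in strength to the theorem itself. There is also a smaller unaddressed point in phase 1: the origin is simultaneously the source and a sink, so the Holroyd--Propp lemma does not apply verbatim; one has to pass to an excursion count.

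The paper avoids both problems by a different reduction. It runs all $n$ particles to infinity (no interior sink) and counts $u_n(0)$, the number of exits from the origin; since each exit begins an excursion that either returns to $0$ or escapes, one has the exact identity $I(\rho_0,u_n(0))=n$, and Lemma 2.3 converts an asymptotic for $u_n(0)$ into the theorem. The structural Lemmas 2.1--2.2 (particles escape along $\pm e_2$, heights and widths are at most $n$) are used only to show the visited set lies in $B_{C_2n}$, so that $u_n(0)=u_n^{C_2n}(0)$ and the Levine--Peres odometer comparison gives $|u_n(0)-n(\tfrac{2}{\pi}\log n+O(1))|\le C C_2 n+C_1$, an error of order $n$ against a main term of order $n\log n$. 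Inverting $u_n(0)\sim \tfrac{2}{\pi}n\log n$ yields the result with no need to certify that particles crossing an intermediate sphere escape. If you want to salvage your outline, the cleanest fix is to abandon the interior sink and the two-phase split and adopt this inversion through $u_n(0)$.
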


The $d \geq 3$ is more complicate. We use the method and idea of rotor-router aggregation in L.Levine,Y.Peres \cite{6}. In $\mathbb{Z}^d$ $n$ particles in turn perform rotor walk starting from 0 until stepping onto a site that has never been visited by the previous particles. The process is called rotor-router aggregation. When rotor-router aggregation finishes denote the set of the sites occupied by particles to be $A_n$. The same with L.Levine,Y.Peres \cite{6}, denote $n=\omega_d r^d$ where $\omega_d$ is the volume of $d$ dimensional ball.

Using the abelian property Lemma 2.4 and rotor-router aggregation we obtain
\begin{theorem}
When $d\geq3$, \[\lim_{n\to\infty}\frac{I(\rho_0,n)}{n}=\alpha_d\]
\end{theorem}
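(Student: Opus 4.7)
The upper bound $\limsup_{n\to\infty} I(\rho_0,n)/n \leq \alpha_d$ is immediate from Schramm's bound \cite{10}, so I focus on showing the matching lower bound $\liminf_{n\to\infty} I(\rho_0,n)/n \geq \alpha_d$. The plan is to compare the rotor process with simple random walk on a mesoscopic scale, using Levine and Peres's outer estimate for rotor-router aggregation \cite{6} as the main quantitative tool, together with the abelian property (Lemma 2.4).

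Fix $\varepsilon>0$ and a large radius $R$ so that, for simple random walk in $\mathbb{Z}^d$ with $d\geq 3$, (i) from any point of $\partial B_R$ the probability of ever hitting $0$ is less than $\varepsilon$, and (ii) the probability that a walk from $0$ reaches $\partial B_R$ before returning to $0$ is within $\varepsilon$ of $\alpha_d$. Both are standard consequences of transience. Using Lemma 2.4, I split the escape process into two stages: first, each of the $n$ particles walks from $0$ with the intermediate stopping rule $\{0\}\cup \partial B_R$, leaving some number $E(n,R)$ of particles on $\partial B_R$; second, these $E(n,R)$ particles continue with the original stopping rule $\{0\}\cup\{\infty\}$. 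By abelian invariance, the total escape count $I(\rho_0,n)$ equals the number of second-stage particles that ever reach $\infty$.

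The crux is the sharp estimate $E(n,R) = n\,\alpha_d^{(R)} + o(n)$, where $\alpha_d^{(R)}$ is the random walk probability in (ii). For this I plan to adapt the odometer comparison from \cite{6}: the discrete Laplacian of the rotor odometer differs pointwise from that of the random walk odometer by at most the vertex degree, so the rotor odometer agrees with $n$ times the Green's function of $B_R$ (with Dirichlet conditions on $\partial B_R\cup\{0\}$) up to an $o(n)$ error as $n\to\infty$ for fixed $R$. The outflow $E(n,R)$ is a boundary functional of this odometer and inherits the same sublinear error. A parallel comparison applied to the second stage, combined with (i), shows that at most a $(\varepsilon+o(1))$ fraction of the $E(n,R)$ particles return to $0$, so $I(\rho_0,n)\geq (1-\varepsilon)(\alpha_d-\varepsilon)n-o(n)$; letting $n\to\infty$ and then $\varepsilon\to 0$ gives the result.

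The principal obstacle is the adaptation of the outer estimate to our stopping rule. Levine and Peres stop particles at the first unvisited site, whereas I stop at $\{0\}\cup\partial B_R$; several of their comparison lemmas must therefore be rephrased for the Dirichlet problem on $B_R\setminus\{0\}$, and the sink at the origin (where particles accumulate rather than fire rotors) introduces a non-standard boundary term in the discrete Laplacian identity. A secondary technical point is that the initial configuration $\rho_0$ is neither constant nor acyclic in the usual sense; Assumption (1) on the cyclical order $m$, with the distinguished direction $e_{d-1}$, is what guarantees that rotors continue to cycle through all $2d$ directions often enough in $B_R$ for the mesoscopic comparison to hold, so Assumption (1) enters the lower bound in an essential way.
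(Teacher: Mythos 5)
Your overall strategy (compare with random walk via the abelian property and a Green-function comparison, with Levine--Peres as the quantitative engine) is in the right spirit, and your first stage is sound: for fixed $R$ the Holroyd--Propp discrepancy is a sum over $B_R$ of increments of a fixed harmonic function, hence $O_R(1)=o(n)$, so $E(n,R)=n\alpha_d^{(R)}+o(n)$. The genuine gap is the second stage. You assert that ``a parallel comparison'' shows at most an $(\varepsilon+o(1))$ fraction of the $E(n,R)$ particles return to $0$, but the relevant discrepancy term there is $\sum_{u}\sum_{v\sim u}|H'(u)-H'(v)|$ with $H'(x)=\mathbb{P}_x(\hbox{hit }0\hbox{ before }\infty)\asymp |x|^{2-d}$, summed over the \emph{entire region visited by the second-stage rotor walk}. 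Over all of $\mathbb{Z}^d$ this sum diverges ($\sum_\rho \rho^{d-1}\cdot\rho^{1-d}=\infty$), and restricted to the region actually visited it is of order the radius of that region (cf.\ Lemma 2.6). So the second-stage error is not ``inherited'' and is not automatically $o(n)$; it is $o(n)$ only after one proves that the visited region, apart from the rays along $\pm e_d$ on which particles escape, is contained in a ball of radius $o(n)$. That diameter bound is precisely the hard content of the paper (Lemma 3.1, proved in Sections 4--6 by transferring the rotor-router aggregation outer estimate to the escape process through height and breadth estimates and cylinder/quotient constructions), together with the structural Lemma 2.1 showing particles escape along straight $\pm e_d$ rays, so that the corridor contribution $\sum_{k\ge0}|x+ke_d|^{1-d}$ converges for $d\ge3$. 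Your proposal cites the outer estimate as ``the main quantitative tool'' but never bridges the mismatch between its stopping rule (first unvisited site) and yours (escape to infinity); that bridge is most of the work.

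Two smaller points. The paper does not use your two-stage $\partial B_R$ decomposition: it works directly with the exit count $u_n(0)$ on a single ball $B_r$ with $r>R(n)$, shows $|u_n^r(0)-nG_r(0,0)|=o(n)$, lets $r\to\infty$, and converts via $I(\rho_0,u_n(0))=n$ and Lemma 2.3; your decomposition could be made to work, but only after the same diameter control. Also, Assumption (1) is not about rotors cycling through all $2d$ directions often enough inside $B_R$; it is what makes Lemmas 2.1 and 2.2 (the escape structure along $\pm e_d$ and the linear height bound) and the dimension-reduction argument of Section 5 go through.
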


\section{2-dimensional case}

In this section we will prove the 2-dimensional case.

For $A \subseteq \mathbb{Z}^d$, $\partial A: =\{y \in A^c: \exists x \in A, s.t. x \sim y\}$, $S_r:=\{x \in \mathbb{Z}^d: r \leq |x| <r+1\}, B_r:=\{x \in \mathbb{Z}^d: |x|<r \}$. We follow the idea of Florescu,Ganguly,Levine,Peres [7] by using another different experiment.

When the initial configuration is $\rho$, $n$ particles in turn perform rotor walk starting from the origin 0 until hitting $ \partial B_r$, denote the times the $n$ particles leaving the site $x$ to be $u_n^r(x)$; When the initial configuration is $\rho$, $n$ particles in turn perform rotor walk starting from the origin 0 until escaping to infinity, denote the times the $n$ particles leaving the site $x$ to be $u_n(x)$.

When the initial configuration is $\rho$, $u_n(0)$ particles in turn perform rotor walk starting from the origin 0 until either returning to 0 or escaping to infinity. Because an excursion from 0 to 0 in the trajectory of a particle which stops once escaping to infinity could be regarded as the trajectory of another particle which stops once either escaping to infinity or returning 0. The above process is the same as we letting $n$ particles in turn perform rotor walk until escaping to infinity. So by definition of $I(\rho,n)$, we know $I(\rho, u_n(0))=n$. Moreover, based on the above reason, we have when $0 \leq k \leq u_{n+1}(0)-u_n(0)-1$, $I(\rho,u_n(0)+k)=n$.

We also note that for initial configuration such that
\[\lim_{k\to\infty} I(\rho,k)=\infty\]
$u_n(x)$ is well-defined for all $n \geq 1$. And obviously $\lim_{k\to\infty} I(\rho_0,k)=\infty$. The next lemma is about the way the particles goes to infinity if we perform rotor walk in $\mathbb{Z}^d$ when the initial configuration is $\rho_0$.

\begin{lemma}
When $d\geq2$ and the initial configuration is $\rho_0$, the particles in turn perform rotor walk starting from the origin 0 until escaping to infinity. Then the only for the particle to escape to infinity is to follow either $+e_d$ or $-e_d$ after finite steps.
\end{lemma}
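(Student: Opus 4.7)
The plan is to prove the lemma by induction on the particle index and to analyze each escaping particle via its $d$-th coordinate. The induction hypothesis is that every particle preceding the current one escaped in direction $+e_d$ or $-e_d$ after finitely many steps; in particular, the set of sites ever visited by the previous particles consists of a bounded ``transient'' part plus finitely many half-columns of the form $\{(c_j,h):h\ge h_j\}$ or $\{(c_j,h):h\le h_j\}$, so this set touches only finitely many columns of $\mathbb{Z}^{d-1}$. The base case is the first particle, where the set of previously-visited sites is empty.

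The central observation I use is: if at some time $t$ the current particle is at a fresh site $y=P_t$ (meaning $y\notin V_{t-1}$, where $V_{t-1}$ denotes all sites visited by any particle before time $t$) with $y_d\ge 0$, and the whole half-column $\{y+ke_d:k\ge 1\}$ is also disjoint from $V_{t-1}$, then an immediate induction gives $P_{t+k}=y+ke_d$ for every $k\ge 0$: the rotor at $y$ equals $+e_d$, the particle moves to $y+e_d$ which is still fresh with rotor $+e_d$, and so on. The statement for $y_d<0$ and $-e_d$ is symmetric. Hence it suffices to show the current particle, if it escapes, eventually reaches such a configuration.

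I split into three cases according to the behavior of $(P_n)_d$. For the unbounded-above case, the induction hypothesis bounds the set of columns occupied by previous particles to a finite set, so I would argue that the current particle eventually lies in a column outside this finite set at a height exceeding the upper envelope of $V_{t-1}$ in that column; the central observation then gives escape in $+e_d$. The unbounded-below case is symmetric. For the bounded case, $|P_n|\to\infty$ together with bounded $(P_n)_d$ forces the $(d-1)$-projection of the trajectory to be unbounded, so the current particle visits infinitely many distinct columns; by the finiteness of previously-occupied columns, at some time $t$ the current particle enters a column $c$ previously untouched by any particle, so every site of $c$ is fresh at time $t-1$, and the central observation forces escape in $\pm e_d$, contradicting the bounded assumption.

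The main obstacle I expect is the sub-case of the unbounded-above (or below) case in which the current particle's $(d-1)$-projection remains inside the finitely many columns previously occupied. Here one has to exploit the finite number of exceptional columns together with the rotor-cycling at their previously visited sites — using that in each such column the rotor at each visited site has been advanced from $+e_d$ by exactly one step by the preceding particle that escaped in that column — to force the current particle eventually to step sideways out of the exceptional set into a column of finite upper envelope, at which point the central observation applies and the proof is complete.
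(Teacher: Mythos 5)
Your induction set-up, the ``fresh half-column'' observation, and the contradiction in the bounded-$(P_t)_d$ case are all sound, and the overall shape (induction on the particle index, structure of the previously visited set) matches the paper's. But the sub-case you flag as ``the main obstacle'' is not a loose end to be tidied up later --- it is the entire content of the lemma --- and the sketch you give for closing it does not work as stated. The claim that ``in each such column the rotor at each visited site has been advanced from $+e_d$ by exactly one step'' is only true at sites visited \emph{exactly once}. At the finitely many sites visited two or more times by the earlier particles, the rotor can point anywhere, including $-e_d$, so in that region nothing in your argument prevents the current particle from descending, looping, or shuttling among the exceptional columns indefinitely without ever ``stepping sideways out.''

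The paper closes this gap with two ingredients you are missing. First, it introduces $h_n^+$, the largest $d$-th coordinate of a site visited at least twice by the first $n$ particles; this is finite by the induction hypothesis (each earlier particle eventually travels along a single ray, so only finitely many sites are multiply visited), and in your unbounded-above case the particle eventually crosses the hyperplane $\{x_d=h_n^++1\}$, above which every visited site has rotor exactly $m(e_d)$. Second, assumption (1) forces $m(e_d)\neq -e_d$, so $m(e_d)$ is a fixed \emph{lateral} direction. Above $h_n^+$ every rotor is therefore either $e_d$ (unvisited site) or $m(e_d)$ (once-visited site): the particle's $d$-th coordinate never decreases, and its coordinate in the direction $m(e_d)$ strictly increases at every non-vertical step. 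Since the earlier particles touch only finitely many columns, only finitely many lateral steps can occur, after which the particle sits in a wholly fresh half-column and your central observation applies. Without $h_n^+$ and this monotonicity in the fixed direction $m(e_d)$, ``force the current particle eventually to step sideways out of the exceptional set'' remains an unproved assertion, so the proposal has a genuine gap at the crux of the proof.
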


\begin{proof}
The first particle escapes to infinity following $+e_d$.

If the first $n$ particles escaping to infinity follow either $+e_d$ or $-e_d$ after finite steps. For $r \in \mathbb{Z}$,
\[H_{d-1}(r):=\{(x_1,\dots, x_{d-1},r):(x_1,\dots,x_{d-1}) \in \mathbb{Z}^{d-1}\}.\]

When n particles in turn perform rotor walk until escaping to infinity, denote
\[ P_n(r):=\begin{cases}
\{(x_1,\dots,x_{d-1},r): \exists x_d \geq 0, \hbox{ such that } u_n(x_1,\dots,x_d)>0\} & r\geq 0\\
\{(x_1,\dots,x_{d-1},r): \exists x_d < 0, \hbox{ such that } u_n(x_1,\dots,x_d)>0\} & r< 0
\end{cases} \]
and
\[h_n^+:=\min \{h \geq 0: \rho_n(x)=m(e_d),\forall x \in \bigcup_{r>h}P_n(r)\}\]
and
\[h_n^-:=\min\{h \geq 0: \rho_n(x)=m(-e_d),\forall x \in \bigcup_{r<-h}P_n(r)\}\]
where $\rho_n(x)$ represents the rotor configuration of $x$ after $n$ particles escape to infinity. The right sides of the definition of $h_n^+$ and $h_n^-$ are not null because of the assumption for the previous $n$ particles. Thus these definitions are well-defined.

Then for the $(n+1)$th particle escaping to infinity£¬the particle must hit
\[H_{d-1}(h_n^++1)\bigcup H_{d-1}(-h_n^--1)\bigcup (\partial (\bigcup_{-h_n^- \leq r \leq h_n^+}P_n(r))).\]

Obviously,the sites in $(\bigcup_{r \in \mathbb{Z}}P_n(r))^c$ have never been visited by the first $n$ particles. The rotor configuration of these sites are the same as their initial configuration.

If the $(n+1)$th particle hit $H_{d-1}(h_n^++1)$, the particle would follow $m(e_d)$ until $\partial (\bigcup_{r \in \mathbb{Z}}P_n(r))$ and then it would follow $e_d$ until $\infty$;
If the $(n+1)$th particle hit $H_{d-1}(-h_n^--1)$, the particle would follow$m(-e_d)$ until $\partial (\bigcup_{r \in \mathbb{Z}}P_n(r))$ and then it would follow $-e_d$ until $\infty$;
If the $(n+1)$th particle hit $\partial (\bigcup_{-h_n^- \leq r \leq h_n^+}P_n(r)))\bigcap \{x:x_d \geq 0\}$, the particle would follow $e_d$ until $\infty$;
If the $(n+1)$th particle hit $\partial (\bigcup_{-h_n^- \leq r \leq h_n^+}P_n(r)))\bigcap \{x:x_d < 0\}$, the particle would follow $-e_d$ until $\infty$.

Thus the $(n+1)$th particle would follow either $e_d$ or $-e_d$ after finte steps.
\end{proof}

We make more remarks about the definition of $h_n^+$ and $h_n^-$ in the above proof. Actually, $h_n^+$ is the maximal $d$th-dimensional coordinate of the sites visited by the first $n$ particles at least twice and $-h_n^-$ is the minimum $d$th-dimensional coordinate of the sites visited by the first $n$ particles at least twice. First there exists $x \in P_n(h_n^+)$ such that $\rho_n(x) \neq m(e_d)$. The first $n$ particles visit $x$ at least twice. Also, if the first $n$ particles visit $y \in P_n(h_n^++1)$ at least twice, and as $\rho_0(y)=e_d$ and $\rho_n(y)=m(e_d)$, we know the first $n$ particles pass through edge $(y, y+e_d)$ at least twice. Hence they visit $y+e_d$ at least twice. The same method could be used to obtain the first $n$ particles visit every site of the lattice line $\{z:z=y+ke_d, k\in \mathbb{N}\}$ at least twice. This is contradictory to the escaping structures we proved in Lemma 2.1. Thus $h_n^+$ is the maximal $d$th-dimensional coordinate of the sites visited by the first $n$ particles at least twice. The similar conclusion is valid for $h_n^-$.
In the following arguments $H_{d-1}(k),P_n(r), h_n^+, h_n^-$ are the same meanings with the above proof.

\begin{lemma}
When $d \geq 2$, we have $h_n^+ \leq n$, $h_n^-\leq n$.
\end{lemma}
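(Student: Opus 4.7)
I would prove both bounds by induction on $n$, the argument for $h_n^-\le n$ being symmetric to the one for $h_n^+\le n$. The base case $n=0$ is immediate: $P_0(r)=\emptyset$ for every $r$, so the defining condition of $h_0^\pm$ is vacuous at $h=0$. In the inductive step it suffices to prove $h_{n+1}^+\le h_n^++1$. By the remark's characterisation of $h_n^+$ as the largest $d$th coordinate of a site visited at least twice by the first $n$ particles, this reduces to showing that the $(n{+}1)$-th particle creates no second visit at any height $\ge h_n^++2$.

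I would then track the trajectory of the $(n{+}1)$-th particle up to the first time $\tau$ at which it leaves $\bigcup_{-h_n^-\le r\le h_n^+}P_n(r)$, using the four-case split from the proof of Lemma~2.1. Before $\tau$ the particle is confined to heights in $[-h_n^-,h_n^+]$ and creates no second visit above $h_n^+$. In the three cases where $x_\tau$ lies in $H_{d-1}(-h_n^--1)$ or on the side boundary of the region, the rest of the trajectory either stays at heights $\le h_n^+$ or ascends through an unvisited column, so only first-time sites are hit above $h_n^+$ and the top of the twice-visited set is unchanged.

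The delicate case is exit through $H_{d-1}(h_n^++1)$. The predecessor of $x_\tau$ lies in $P_n(h_n^+)$, so the column of $x_\tau$ contains a site visited at height $\ge 0$; hence $x_\tau\in P_n(h_n^++1)$ and $\rho_n(x_\tau)=m(e_d)$. Here assumption (1) becomes essential: a short check of the separation inequality with $i=d-1$ shows that $\eta(-e_d)\notin\{1,2d-1\}$, so $m(e_d)\neq\pm e_d$ and thus $m(e_d)=\pm e_j$ for some $j\le d-1$. The particle therefore walks along a horizontal ray at height $h_n^++1$; each site of the ray that still lies in $P_n(h_n^++1)$ carries rotor $m(e_d)$ and keeps the particle moving in the same direction. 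At the first site $w$ whose column is not in $P_n$, one has $\rho_0(w)=e_d$ and the particle ascends through never-visited sites to infinity. Therefore every new second visit above $h_n^+$ occurs at height exactly $h_n^++1$, giving $h_{n+1}^+\le h_n^++1\le n+1$.

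The main obstacle lies precisely in this top-exit subcase: one must prevent the particle from being pushed directly upward upon reaching height $h_n^++1$ and thereby creating a second visit at height $h_n^++2$ inside the already visited region. Assumption (1) is engineered exactly for this purpose, forcing $m(e_d)$ to be horizontal; without it the induction would fail.
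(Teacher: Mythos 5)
Your proposal is correct and follows essentially the same route as the paper: both reduce the bound to the increment estimate $h_{n+1}^+-h_n^+\leq 1$ and establish it via the escape structure of Lemma 2.1, with assumption (1) forcing $m(e_d)$ to be horizontal so that a particle reaching height $h_n^++1$ travels laterally and then ascends through fresh sites, creating second visits only at that height. Your write-up is somewhat more detailed on the case analysis (and on verifying $\eta(-e_d)\notin\{1,2d-1\}$), but the underlying argument is the paper's.
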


\begin{proof}
We only need to prove $\forall k \in \mathbb{N}, h_{k+1}^+-h_k^+ \leq 1$.

After the $k$th particles escape to $\infty$, the first particle leading to twice visits on a site of the the hyperplane $\{x: x_d=h_k^++1\}$ must follow $m(e_d)$ until hitting $\partial (\bigcup_{m \in \mathbb{Z}} P_n(m))$. Then it would follow $e_d$ until $\infty$. After this particle finishes its rotor walk, the number, $l$,  of the particles escaping to infinity must be no less than $k+1$. So we have
\[h_{k+1}^+ \leq h_l^+= h_k^++1.\]
The above equality uses the monotonicity of $h_n^+$ depending on $n$. We could also know $h_n^-\leq n$ using the same method.
\end{proof}

Notice that we use the assumption (1) for $m$ in the above two proofs. Because $m(e_d) \neq -e_d$, the particle could reach $\partial (\bigcup_{r \in \mathbb{Z}}P_n(r))$ through $m(e_d)$. For example in $\mathbb{Z}^2$ the only permissible cyclical orders are north$\rightarrow$east$\rightarrow$south$\rightarrow$west$\rightarrow$north and north$\rightarrow$west$\rightarrow$south$\rightarrow$east$\rightarrow$north.

\begin{lemma}
For initial configuration $\rho$ such that $\lim_{n\to\infty} I(\rho,n)=\infty$, we have\\
When $d\geq3$, \[\limsup_{n\to\infty}\frac{n}{u_n(0)}=\limsup_{n\to\infty}\frac{I(\rho,n)}{n},\quad \liminf_{n\to\infty}\frac{n}{u_n(0)}=\liminf_{n\to\infty}\frac{I(\rho,n)}{n}\]
When $d=2$, \[\limsup_{n\to\infty}\frac{n}{u_n(0)/\log u_n(0)}=\limsup_{n\to\infty}\frac{I(\rho,n)}{n/\log n},\quad \liminf_{n\to\infty}\frac{n}{u_n(0)/\log u_n(0)}=\liminf_{n\to\infty}\frac{I(\rho,n)}{n/\log n}.\]
\end{lemma}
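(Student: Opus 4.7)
The plan is to exploit the step-function structure of $I(\rho,\cdot)$ recorded in the paragraph preceding the lemma: by the abelian-type observation there, $I(\rho,m)=n$ precisely when $u_n(0)\le m<u_{n+1}(0)$. Since each of the first $n$ particles must leave the origin at least once, $u_n(0)\ge n$; under the standing hypothesis $\lim_k I(\rho,k)=\infty$, all $u_n(0)$ are finite, and $u_n(0)\to\infty$ while $n/u_n(0)\le 1$ for every $n$. With this in hand the two claimed equalities reduce to formal statements about a non-decreasing integer-valued function and its sequence of jump points.

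For $d\ge 3$, on each flat interval $[u_n(0),u_{n+1}(0))$ the ratio $I(\rho,m)/m=n/m$ is non-increasing in $m$, attaining its maximum $n/u_n(0)$ at the left endpoint and its minimum $n/(u_{n+1}(0)-1)$ at the right endpoint. Taking $\sup$ or $\inf$ of $I(\rho,m)/m$ over $m\ge M$ therefore reduces to taking $\sup$ or $\inf$ of these extreme values over the corresponding range of $n$. Passing to the limit gives $\limsup_m I(\rho,m)/m=\limsup_n n/u_n(0)$ directly, and $\liminf_m I(\rho,m)/m=\liminf_n n/(u_{n+1}(0)-1)$. Because $u_{n+1}(0)\to\infty$ and $n/u_{n+1}(0)\le 1$, the latter is asymptotic to $(n+1)/u_{n+1}(0)$, so after the index shift $n+1\mapsto n$ it equals $\liminf_n n/u_n(0)$, as required.

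For $d=2$ I would run the same argument with the normalisation $m/\log m$. This function is strictly increasing for $m>e$, so on each flat interval $I(\rho,m)/(m/\log m)=n\log m/m$ is again non-increasing for $m$ large, with extreme values $n\log u_n(0)/u_n(0)=n/(u_n(0)/\log u_n(0))$ at the left endpoint and $n\log(u_{n+1}(0)-1)/(u_{n+1}(0)-1)$ at the right endpoint. The $\limsup$ identity follows directly; for the $\liminf$, the right-endpoint value differs from $(n+1)\log u_{n+1}(0)/u_{n+1}(0)$ by a correction of order $\log u_{n+1}(0)/u_{n+1}(0)\to 0$, and the same index shift yields $\liminf_n n/(u_n(0)/\log u_n(0))$.

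The only piece of the argument that is not purely formal is this end-point/reindexing step for the $\liminf$s. The key tool is the uniform bound $u_n(0)\ge n$, so that $n/u_n(0)$ is bounded, together with $u_n(0)\to\infty$; these together ensure that the corrections between $u_{n+1}(0)-1$ and $u_{n+1}(0)$, and between $(n+1)/u_{n+1}(0)$ and $n/u_{n+1}(0)$ (and their $d=2$ analogues with $\log$-factors), vanish uniformly in $n$ and hence do not affect $\limsup$ or $\liminf$. This is the main, and essentially only, technical point; everything else is translation between the step function $I(\rho,\cdot)$ and its jump-point sequence $\{u_n(0)\}$.
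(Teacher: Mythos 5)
Your proposal is correct and follows essentially the same route as the paper: both arguments use the observation that $I(\rho,m)=n$ on the interval $u_n(0)\le m<u_{n+1}(0)$, sandwich the normalised ratio between its values at the two endpoints of each flat interval, and pass to the limit. Your treatment is somewhat more explicit than the paper's about why the endpoint and reindexing corrections (replacing $u_{n+1}(0)-1$ by $u_{n+1}(0)$ and $n$ by $n+1$) vanish, which the paper leaves implicit.
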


\begin{proof}
Obviously when $0 \leq k \leq u_{n+1}(0)-u_n(0)-1$, $I(\rho,u_n(0)+k)=n$.

So when $u_k(0) \leq n <u_{k+1}(0)$,
\[\frac{I(\rho,n)}{n}=\frac{I(\rho,u_k(0))}{n}=\frac{I(\rho,u_k(0))}{u_k(0)}. \frac{u_k(0)}{n} \leq \frac{I(\rho,u_k(0))}{u_k(0)}\]
\[\frac{I(\rho,n)}{n}=\frac{I(\rho,u_{k+1}(0))-1}{u_{k+1}(0)}.\frac{u_{k+1}(0)}{n} \geq \frac{I(\rho,u_{k+1}(0))-1}{u_{k+1}(0)}\]
Thus we could draw the above conclusion when $d \geq 3$. The same method could be used to prove the case when $d=2$.
\end{proof}

Hence we only need to prove that if the  initial configuration is $\rho_0$, when $d=2$,
\[\lim_{n\to\infty}\frac{n}{u_n(0)/\log u_n(0)}=\frac{\pi}{2}\]
and when $d\geq 3$,
\[\lim_{n\to\infty}\frac{n}{u_n(0)}=\alpha_d.\]

First of all, we state the following abelian property of rotor walk without proof. This property is proved in \cite{3} and also mentioned in L.Levine,Y.Peres \cite{6}, Florescu,Ganguly,Levine,Peres \cite{7}, Alexander E. Holroyd, L.Levine \cite{3}. Abelian property says that the position of the particles and the times the particles exit from certain site when rotor walk finishes do not depend on the choice we choose the particles in the roter-router process.

\begin{lemma}
(Abelian property) For a finite graph $\Gamma =(V,E), W \subseteq V$, on every vertex of $\Gamma$ there exists some particles. If there is a initial configuration $\rho$ on the graph, each step we choose a particle on $V \backslash W$ and perform one step rotor walk. When the particles hit $W$, they would stop. In the end all the particles stay on $W$. Then the final position of the particles and the times the particles exit from each site of the graph $\Gamma$ when rotor walk finishes do not depend on the choice we choose the particles in the rotor-router process.
\end{lemma}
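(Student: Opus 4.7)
The plan is to prove the abelian property by combining a local commutativity (``diamond'') lemma with a termination statement, and then invoking the standard swap argument used for confluent rewriting systems.

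First I would recast the process algebraically. A state is a pair $(\sigma,\rho)$, where $\sigma:V\to\mathbb{N}$ records particle counts and $\rho:V\to\mathcal{E}$ is the rotor configuration. For $v\in V\backslash W$ with $\sigma(v)\geq 1$, the firing map $T_v$ decrements $\sigma(v)$, increments $\sigma(v+\rho(v))$, and replaces $\rho(v)$ by $m(\rho(v))$. A legal firing sequence is a composition of such maps applied to the initial state, and it is maximal when no vertex of $V\backslash W$ is fireable.

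The key step is the local diamond: if $u\neq v$ are both fireable at a state, then $T_u$ and $T_v$ remain fireable after the other is applied, and $T_uT_v=T_vT_u$. This is a direct bookkeeping check --- $T_u$ touches $\rho$ only at $u$, and its effect on $\sigma$ can only raise $\sigma(v)$, so $\sigma(v)\geq 1$ is preserved; the two compositions manifestly yield the same $\sigma$ and $\rho$. Using this repeatedly in the classical swap argument one compares two maximal sequences $(f_1,\ldots,f_N)$ and $(g_1,\ldots,g_M)$: at the first index $k$ where they differ, locate $f_k$ later in the $g$-sequence and swap it leftward to position $k$; induction on this process forces $N=M$, identical multisets of firings and, in particular, equal exit counts at every vertex and identical final particle positions.

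The remaining ingredient is termination: every legal firing sequence is finite. I expect this to be the main obstacle, since rotor walk has no obvious monovariant. Using commutativity one can discharge particles one at a time, so it suffices to show that a single rotor walker reaches $W$ after finitely many steps. For that, assume the walker never reaches $W$; since $\Gamma$ is finite some vertex $v\in V\backslash W$ is visited infinitely often, and the cyclic structure of $m$ forces every outgoing edge of $v$ to be used infinitely often. Propagating this through the connected component containing the walker's start and terminating at an edge into $W$ (which exists by reachability) yields a contradiction. With termination secured, the diamond/swap argument delivers the abelian property as stated.
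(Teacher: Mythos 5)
Your proof is correct, but note that the paper does not actually prove this lemma: it states it without proof and cites Holroyd--Levine--M\'esz\'aros--Peres--Propp--Wilson for the argument, so there is no in-paper proof to compare against. What you have written is the standard proof from that literature: the local diamond property $T_uT_v=T_vT_u$ for $u\neq v$ (with preservation of fireability), the swap/permutation argument showing any two finite complete firing sequences use the same multiset of firings, and single-walker termination via the ``every neighbor of an infinitely-visited vertex is infinitely visited'' propagation. All three pieces check out. The one place a careful write-up needs an extra half-step is the passage from ``the one-particle-at-a-time schedule terminates'' to ``\emph{every} legal firing sequence terminates'': commutativity alone does not literally say this, and your swap argument as stated compares two sequences already assumed finite. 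The standard fix is the least-action form of the diamond lemma --- once one finite complete sequence of length $N$ exists, induction on $N$ shows every legal sequence has length at most $N$ (an infinite legal sequence would have a legal prefix of length $N+1$, which the induction rules out), and every complete one has length exactly $N$ and is a permutation of the first. With that lemma made explicit, your argument is complete; the conclusions about exit counts and final positions then follow since the exit count at $x$ is the number of firings of $x$ and the rotor history at $x$ is determined by that count.
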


Next we begin to prove the 2-dimensional case.
Let $f:\mathbb{Z}^d \rightarrow \mathbb{R}$. For $x\sim y$,
\[\nabla f(x,y):=f(y)-f(x).\]
We denote $\mathbb{L}^d=(\mathbb{Z}^d,\mathbb{E}^d)$ to be the graph consisting of $d$ dimension Euclidian sites and their incident edges. The edge of the graph is denoted by the ends of the edge. Denote
\[g:\mathbb{E}^d \rightarrow \mathbb{R}, \textrm{div} g(x):=\frac{1}{2d}\sum_{y\sim x}g(x,y)\]
and
\[\triangle f(x):=\textrm{div} (\nabla f)(x)=\frac{1}{2d}\sum_{y\sim x}f(y)-f(x).\]

The next lemma is from L.Levine,Y.Peres \cite{6}.
\begin{lemma}
$n$ particles in turn perform rotor walk starting from the origin 0 until hitting $\partial B_r$. $\forall (x,y) \in \mathbb{E}^d$, we denote $N_n(x,y)$ to be the times these particles go through the edge $(x,y)$.
\[K_n(x,y):=N_n(x,y)-N_n(y,x)\]
Then there exists $R_n: \mathbb{E}^d \rightarrow \mathbb{Z}$, such that $|R_n(x,y)| \leq 4d-2 $, and also
\[\nabla u_n^r(x,y)=-2dK_n(x,y)+R_n(x,y)\] for all edges $(x,y) \in \mathbb{E}^d$.
\end{lemma}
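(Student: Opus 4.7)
The plan is to exploit the cyclic structure of the rotor to show that, at each vertex $x$, the number of exits in each of the $2d$ coordinate directions is as equal as possible. I would start from the trivial bookkeeping identity $u_n^r(x) = \sum_{y \sim x} N_n(x,y)$: every exit from $x$ heads to exactly one neighbor, so summing the direction counts recovers the total exit count.

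The heart of the argument is the rotor mechanism itself. After $u_n^r(x)$ exits from $x$, the rotor has pointed successively in the $2d$ cyclic directions $\rho_0(x), m(\rho_0(x)), m^2(\rho_0(x)), \dots$ Writing $u_n^r(x) = 2dq + s$ with $0 \le s \le 2d-1$, every neighbor is exited exactly $q$ times, and in addition the first $s$ directions in the cyclic order receive one extra exit. Consequently $2d N_n(x,y) - u_n^r(x) \in \{-s,\, 2d-s\}$, and a short case split on $s=0$ versus $s \ge 1$ yields the sharp bound $|2d N_n(x,y) - u_n^r(x)| \le 2d-1$.

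Applying this estimate at both endpoints $x$ and $y$ and subtracting gives
\[
u_n^r(y) - u_n^r(x) = 2d\bigl(N_n(y,x) - N_n(x,y)\bigr) + (\varepsilon_y - \varepsilon_x),
\]
where $\varepsilon_x := u_n^r(x) - 2d N_n(x,y)$ and $\varepsilon_y := u_n^r(y) - 2d N_n(y,x)$ each have absolute value at most $2d-1$. Setting $R_n(x,y) := \varepsilon_y - \varepsilon_x$ then delivers $\nabla u_n^r(x,y) = -2d K_n(x,y) + R_n(x,y)$ together with $|R_n(x,y)| \le 4d-2$. The only delicate point worth flagging is the sharpness of the constant $4d-2$, rather than a naive $4d$: it stems from the observation that when $s=0$ the direction error at $x$ vanishes identically, so the extreme values $\pm 2d$ are never attained. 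Aside from that book-keeping, the argument is a routine count of how many times the cyclic rotor has pointed in each direction.
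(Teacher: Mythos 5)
Your argument is correct and is essentially the standard proof of this lemma from Levine--Peres (the paper itself states it without proof, citing \cite{6}): the cyclic rotor forces $|u_n^r(x)-2dN_n(x,y)|\le 2d-1$ at each endpoint, and subtracting the two identities gives the claim with $|R_n|\le 4d-2$. Your case split on $s=0$ versus $s\ge 1$ is exactly the point needed to get $2d-1$ rather than $2d$, so nothing is missing.
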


Let $(X_i)_{i\geq 0}$ be simple random walk on $\mathbb{Z}^d$. For $x,y \in B_r$, $T:=\min \{t>0 :X_t \in \partial B_r\}$, $G_r(x,y):=\mathbb{E}_x\# \{i<T: X_i=y\}$. We cite the following results about classical potential theory of random walk from G. F. Lawler \cite{5}.
When $x\neq 0$,
\[ G_r(x,0)=\begin{cases}
a_d(|x|^{2-d}-r^{2-d})+O(|x|^{1-d}) & d\geq 3\\
\frac{\pi}{2}(\log r-\log |x|)+O(|x|^{-1}) & d=2
\end{cases} .\]
When $d=2$,
\[G_r(0,0)=\frac{2}{\pi}\log r+O(1).\]

The next lemma comes from L.Levine,Y.Peres \cite{6}.

\begin{lemma}

There exists a constant $C$ depending only on dimension $d$, $\forall x \in B_r, \forall \rho$ where $0< \rho \leq r$, such that
\[\sum_{y \in B_r, |x-y| \leq \rho}\sum_{z\sim y}|G_r(x,y)-G_r(x,z)| \leq C\rho \]
\end{lemma}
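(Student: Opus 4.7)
The plan is to reduce the double sum to the pointwise gradient estimate $|G_r(x,y)-G_r(x,z)| = O(|x-y|^{1-d})$ (valid away from $y=x$) and then count lattice points in successive spherical shells around $x$.

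First I would upgrade the cited asymptotic for $G_r(\cdot,0)$ to an expansion at a general base point $x$ by translation invariance of the leading term, writing
\[G_r(x,y) = F(|x-y|) + E_r(x,y)\qquad (y\neq x),\]
where $F(t)=a_d t^{2-d}$ for $d\geq 3$ and $F(t)=\tfrac{2}{\pi}(\log r - \log t)$ for $d=2$, with an error satisfying $|E_r(x,y)|\leq C|x-y|^{1-d}$ uniformly in $r$ (the boundary correction being a harmonic function whose gradient is bounded by the same order via the maximum principle and the discrete Poisson kernel asymptotic).

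For any $y$ with $|x-y|\geq 2$ and any neighbor $z\sim y$, one has $|x-z|\geq \tfrac{1}{2}|x-y|$, so the mean value theorem applied to $F$ together with the error bound on $E_r$ yields $|G_r(x,y)-G_r(x,z)| \leq C|x-y|^{1-d}$. For the $O(1)$ lattice points $y$ with $|x-y|<2$ the differences are bounded by a constant; in $d=2$ this requires observing that the divergent $\log r$ terms cancel when differencing $G_r(x,x)$ against $G_r(x,z)$ for $z\sim x$.

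Finally I would sum over annular shells: the shell $\{y: k\leq |x-y|<k+1\}$ contains $O(k^{d-1})$ lattice points, each contributing $\sum_{z\sim y}|G_r(x,y)-G_r(x,z)|=O(k^{1-d})$, so
\[\sum_{y\in B_r,\,|x-y|\leq\rho}\sum_{z\sim y}|G_r(x,y)-G_r(x,z)| \;\leq\; C_0 + C_1\sum_{k=2}^{\lceil\rho\rceil} k^{d-1}\cdot k^{1-d} \;\leq\; C\rho,\]
absorbing the $O(1)$ near-diagonal contribution into the constant. The main obstacle is the first step: making the gradient bound $O(|x-y|^{1-d})$ \emph{uniform in $r$}, since the naive difference of the two stated expansions only yields the weaker $O(|x-y|^{-\min(1,d-2)})$. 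Overcoming this requires the stronger two-term Lawler expansion of the free lattice Green's function combined with a uniform estimate on the discrete harmonic boundary corrector, after which the shell summation is routine.
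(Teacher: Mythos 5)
The paper does not actually prove this lemma; it is quoted verbatim from Levine--Peres \cite{6}, and your overall strategy --- a gradient bound $|G_r(x,y)-G_r(x,z)|=O(|x-y|^{1-d})$ followed by summation over shells, which indeed gives $\sum_{k\le\rho}k^{d-1}\cdot k^{1-d}=O(\rho)$ --- is exactly the strategy used there. You are also right about the main technical point: the expansion as quoted in Section 2 (error $O(|x|^{1-d})$) is too weak to difference, and one must go back to Lawler's two-term expansion of the free Green's function $g$ (the potential kernel $a$ for $d=2$) with error $O(|x|^{-d})$ (resp.\ $O(|x|^{-2})$), which yields $|g(v)-g(v')|\le C|v|^{1-d}$ for $v'\sim v$, $v\neq 0$.

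The gap is in your treatment of the boundary corrector. Writing $G_r(x,y)=g(y-x)-\mathbb{E}_x[g(X_T-y)]$ for $d\ge3$, your claimed bound $|E_r(x,y)|\le C|x-y|^{1-d}$ on the corrector itself is false: for $x=0$ and $y$ adjacent to $\partial B_r$ the corrector is of order $r^{2-d}$, which exceeds $r^{1-d}$. What you actually need is a bound on its discrete gradient, and that is not delivered by "maximum principle plus Poisson kernel asymptotics": the interior gradient estimate for discrete harmonic functions degenerates like $1/\mathrm{dist}(y,\partial B_r)$, and the naive bound $|\mathbb{E}_x[g(X_T-y)-g(X_T-z)]|\le C\,\mathbb{E}_x|X_T-y|^{1-d}$ acquires a spurious $\log r$ when $y$ sits next to the sphere, so a uniform pointwise bound $O(|x-y|^{1-d})$ does not follow from the tools you invoke. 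The standard fix (and the one in \cite{6}) is to not seek a pointwise bound at all: difference inside the expectation to get $C\,\mathbb{E}_x|X_T-y|^{1-d}$, then interchange the sum over $y$ with $\mathbb{E}_x$. For every fixed $w$ one has $\sum_{y:\,|x-y|\le\rho}|w-y|^{1-d}\le C\rho$ (split at $|w-y|\le 2\rho$, which contributes $O(\rho)$ by your own shell count centered at $w$, and bound the remaining $O(\rho^d)$ terms by $(2\rho)^{1-d}$ each), so the corrector contributes at most $C\rho$ uniformly in the exit point. With that substitution, and the free part handled exactly by your shell summation, the argument closes.
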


We know for $x \in B_r$

\[\triangle u_n^r(x)=-2d(\textrm{div} K_n)(x)+\textrm{div} R_n(X)=-1_{\{x=0\}}n+\textrm{div} R_n(x)\]
and
\[\triangle G_r(x,0)=-1_{\{x=0\}}.\]
So
\[\triangle (u_n^r(x)-nG_r(x,0))=\textrm{div} R_n(x)\]

Next an estimation between $\textrm{div} R_n(x)$ and $|u_n^r(0)-nG_r(0,0)|$ is expected to be given and L.Florescu,S.Ganguly,L.Levine,Y.Peres \cite{7} gave one way to do this. Their final conclusion is that the lower limit of escape rates is larger than 0 while our conclusion is that the limit of escape rates exists and equals to the upper bound of the upper limit. For a self-contained reason we give a relatively complete reasoning. This method is from L.Florescu,S.Ganguly,L.Levine,Y.Peres \cite{7}.

First,
\[u_n^r(x)=\sum_{k\geq 0}\mathbb{E}_x(u_n^r(X_{k\wedge T})-u_n^r(X_{{(k+1)}\wedge T})).\]
Also
\[\mathbb{E}_x(u_n^r(X_{k\wedge T})-u_n^r(X_{{(k+1)}\wedge T})|\mathcal{F}_{k\wedge T})=-\bigtriangleup u_n^r(X_k)1_{\{k<T\}}.\]
So

\begin{align*}
u_n^r(x) & =\sum_{k\geq 0}\mathbb{E}_x[-\bigtriangleup u_n^r(X_k)1_{\{k<T\}}]\\
   & =\sum_{k \geq 0}\mathbb{E}_x[n1_{\{X_k=0,k<T\}}-\textrm{div} R_n(X_k)1_{\{k<T\}}]\\
   & =nG_r(x,0)-\sum_{k \geq 0}\mathbb{E}_x[1_{\{k<T\}}\textrm{div} R_n(X_k)]
\end{align*}
Thus
\[u_n^r(x)-nG_r(x,0)=-\frac{1}{2d}\sum_{k \geq 0}\mathbb{E}_x[1_{\{k<T\}}\sum_{z\sim X_k}R_n(X_k,z)].\]
Denote $N(x)$ to be the number of the edges connect $x$ with $\partial B_r$. Because $|R_n| \leq 4d-2$,

\begin{align*}
|u_n^r(x)-nG_r(x,0)| & \leq \frac{1}{2d}|\sum_{y,z \in B_r,y\sim z}[G_r(y,x)R_n(y,z)]|+2|\sum_{k \geq 0}\mathbb{E}_x[1_{\{k<T\}}N(X_k)]|\\
& \leq \frac{1}{2d}|\sum_{y,z \in B_r,y\sim z}[G_r(y,x)R_n(y,z)]|+C_1.
\end{align*}
The reason of the last inequality is that 2$\sum_{k \geq 0}\mathbb{E}_x[1_{\{k<T\}}N(X_k)]<C_1$ for a constant $C_1$ depending only on dimension $d$. By the definition of $R_n$, $R_n(y,z)=-R_n(z,y)$. So
\[|u_n^r(x)-nG_r(x,0)| \leq \frac{1}{4d}|\sum_{y,z \in B_r,y\sim z}[(G_r(y,x)-G_r(z,x))R_n(y,z)]| +C_1.\]

Next we give a proof of Theorem 1.1.

\begin{proof}
When $d=2$, because $\forall x=(x_1, x_2) \in P_n(0)$, there exist a path from 0 to some site $\overline{x}$ on $l_x:=\{y \in \mathbb{Z}^2:y=x+ke_2 ,k \in \mathbb{N}\}$ such that every site on the path belongs to $\bigcup_{-h_n^- \leq r \leq h_n^+}P_n(r)$. These sites in $\{x=(x_1, x_2) \in \mathbb{Z}^2: x_2 \geq 0\}$ could be projected onto $H_1(0)$ while these sites in $\{x=(x_1, x_2) \in \mathbb{Z}^2: x_2 < 0\}$ could be projected onto $H_1(-1)$. Thus we could find a path located in $P_n(0) \cup P_n(-1)$ and connect 0 with $x$. Every point on this new path correspond to a particle escape to infinity and theses particles are obviously different from each other. So $|x_1| \leq n$. The same method could be use to prove if $x \in P_n(-1)$, $|x_1| \leq n$.

Also by Lemma 2.2, there exists a constant $C_2$ such that
\[\bigcup_{-h_n^- \leq r \leq h_n^+}P_n(r)\subseteq B_{C_2n}.\]
Let $r=C_2n$. Obivously $u_n(0)=u_n^{C_2n}(0)$. So
\[|u_n(0)-n(\frac{2}{\pi}\log n+O(1))| \leq \sum_{y,z \in B_{C_2n},y\sim z}|G_{C_2n}(y,0)-G_{C_2n}(z,0)|+C_1.\]
Some simple calculus could lead to that when $s>t>0$£¬
\[|\frac{s}{\log s}-\frac{t}{\log t}| \leq \frac{1}{\log t}|s-t|.\]
Thus
\begin{align*}
|\frac{u_n(0)}{\log u_n(0)}-\frac{n(\frac{2}{\pi}\log n+O(1))}{\log [n(\frac{2}{\pi}\log n+O(1))]}| & \leq \frac{\sum_{y,z \in B_{C_2n},y\sim z}|G_{C_2n}(y,0)-G_{C_2n}(z,0)|+C_1}{\log (u_n(0)\wedge n(\frac{2}{\pi}\log n+O(1)))}.
\end{align*}
Divided by $n$ on both sides of the inequality and by Lemma 2.6, let $n \rightarrow \infty$. We could know
\[\limsup_{n \to \infty}|\frac{u_n(0)}{n\log u_n(0)}-\frac{2}{\pi}| \leq \limsup_{n \to \infty}\frac{1}{\log (u_n(0)\wedge n(\frac{2}{\pi}\log n+O(1)))}[CC_2+\frac{C_1}{n}]=0.\]
So
\[\lim_{n \to \infty}\frac{n}{u_n(0)/\log u_n(0)}=\frac{\pi}{2}.\]
By Lemma 2.3, we obtain
\[\lim_{n \to\infty}\frac{I(\rho_0,n)}{n/\log n}=\frac{\pi}{2}.\]
\end{proof}

\section{Higher dimensional case}

\begin{lemma}
When $d\geq3$, there exists $R(n)>0$ such that
\[\bigcup_{-h_n^- \leq r \leq h_n^+}P_n(r)\subseteq B_{R(n)}\]
and
\[\lim_{n \to \infty}\frac{R(n)}{n}=0.\]
\end{lemma}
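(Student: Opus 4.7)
My plan is to prove $R(n) = o(n)$ by adapting the potential-theoretic outer estimate of Levine--Peres \cite{6} for rotor-router aggregation. Lemma 2.2 already controls the $e_d$-extent by $\max(h_n^+, h_n^-) \leq n$, so the set $\bigcup_{-h_n^-\leq r\leq h_n^+} P_n(r)$ is automatically contained in $B_{Cn}$; the task is to push this down to the sub-linear regime. Taking $r$ large enough that the active region lies inside $B_r$, the abelian property (Lemma 2.4) lets me work with $u_n^r = u_n$ on the active region.

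The key identity is the one derived in Section 2,
\[
u_n^r(x) - nG_r(x,0) = -\frac{1}{2d}\sum_{k\geq 0}\mathbb{E}_x\Bigl[1_{\{k<T\}}\sum_{z\sim X_k} R_n(X_k,z)\Bigr],
\]
which is valid for every $d\geq 2$. In $d\geq 3$ the Green-function asymptotic $G_r(x,0) = a_d(|x|^{2-d}-r^{2-d}) + O(|x|^{1-d})$ gives $nG_r(x,0) = O(n|x|^{2-d}) = o(1)$ whenever $|x| \gg n^{1/(d-2)}$, and combining $|R_n| \leq 4d-2$ with Lemma 2.6 bounds the right-hand side by $C\rho + C_1$, where $\rho$ is any bound on the support radius of $R_n$ (which sits inside the active region). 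Evaluated at a visited site $x$ with $|x| = R(n)$, where $u_n^r(x) \geq 1$, this yields $1 \leq nG_r(x,0) + CR(n) + C_1$.

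This inequality by itself is too weak, since the error $CR(n)$ is of the same order as the radius being bounded. The way forward is a bootstrap in the style of Levine--Peres \cite{6}: I would split the edge sum appearing in the estimate into an interior part, where $G_r(\,\cdot\,,x)$ is discrete-harmonic and the contribution telescopes against the boundary of the support of $R_n$, and a thin boundary layer of the active region, where the decay of $G_r$ away from $x$ furnishes an extra sub-linear factor. Starting from the crude bound $R(n) \leq Cn$ of Lemma 2.2 and iterating produces a sharper bound, matching the $O(n^{1/d}\log^\beta n)$ outer estimate for rotor-router aggregation, which is $o(n)$ for every $d\geq 3$.

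The principal obstacle is precisely this bootstrap: the bare potential estimate carries an error linear in the radius we are trying to bound, so a sub-linear radius cannot be read off from a single application. The Levine--Peres interior/boundary decomposition is what supplies the genuine sub-linear gain; once the transverse extent is controlled by $o(n)$, Lemma 2.2 bounds the $e_d$-extent, and Pythagorean addition completes the proof that $R(n)/n \to 0$.
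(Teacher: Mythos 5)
There is a genuine gap: the ``bootstrap'' you defer to is the entire content of the lemma, and the tool you propose to bootstrap with cannot deliver it. The identity $u_n^r(x)-nG_r(x,0)=-\frac{1}{2d}\sum_{k\geq 0}\mathbb{E}_x[\cdots]$ together with Lemma 2.6 is the \emph{inner}-estimate machinery of \cite{6}; it produces an upper bound on $u_n^r$ of the form $nG_r(x,0)+O(\rho)$ and therefore can never certify that a far site is \emph{unvisited} --- an outer estimate must exclude thin tentacles, which contribute nothing detectable to such a bound. Worse, for the process stopped on $\partial B_r$ the support of $R_n$ is the whole visited set inside $B_r$, which by Lemma 2.1 contains the escape columns running all the way out to $\partial B_r$; so the ``support radius of $R_n$'' is $r$, not $R(n)$, and Lemma 2.6 gives $Cr\to\infty$. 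Taming that exterior contribution is exactly what the proof of Theorem 1.2 does using the count of visited columns, the bound on the edge boundary of $P_n(0)$, and Lemma 3.2 --- but that computation takes Lemma 3.1 as an input, so invoking it here is circular. Finally, your target $O(n^{1/d}\log^{\beta}n)$ is unattainable: above height $h_n^+$ each column is traversed by at most one particle, so $|P_n(h_n^+)|$ is of order $n$ inside a single $(d-1)$-dimensional hyperplane, forcing $R(n)\geq cn^{1/(d-1)}$.

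The paper's actual proof uses none of this potential theory and splits into two independent estimates. Section 5 bounds the height, $h_n^{\pm}\leq Cn^{2/3}(\log n)^2$, by combining the aggregation outer estimate $A_n\subseteq B_{Cr(\log r)^d}$ of Lemma 4.4 with a chain of abelian-property comparisons (from $n$ particles at the origin, to one particle on each site of $A_n$, to one on each site of a covering ball, then wrapping coordinates into $\mathbb{Z}/(m\mathbb{Z})\times\mathbb{Z}^{d-1}$) that reduces the $d$-dimensional height to the two-dimensional bound of Lemma 2.2 applied to $O(n^{2/3}(\log n)^2)$ particles. Section 6 bounds the breadth: each column $l_{\tilde{x}}$ is treated as a single site of $\mathbb{Z}^{d-1}$, and the Holroyd--Propp weight argument (Lemma 4.1, adapted as Lemma 6.1) is rerun on the cylindrical shells $T_r$; because the weights must be transported along columns of length $h_n^{\pm}$, the error term acquires the factor $Cn^{2/3}(\log n)^2$, and the shell iteration yields transverse radius at most $Cn^{5/6}(\log n)^{5/2}$. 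Together these give $R(n)=K_d n^{5/6}(\log n)^{5/2}=o(n)$. To salvage your plan you would need to replace the Green-function identity by this shell-counting argument, and to establish the height bound separately, since it enters the breadth estimate as an input.
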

The proof of the above lemma will be left to the last section.

The next lemma comes from L.Florescu,S.Ganguly,L.Levine,Y.Peres \cite{7}¡£
\begin{lemma}
When $d \geq 3$, there exists a sufficient small constant $\beta$ depending only on dimenstion $d$ such that for any initial configuration, $\forall x \in \partial B_{\beta n^{\frac{1}{d-1}}}$, $u_n^{n^{\frac{1}{d-1}}}(x)>0$.
\end{lemma}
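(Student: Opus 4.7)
My plan is to re-run the same discrete-Laplacian argument the paper carries out in Section 2 at $x=0$, but now at a generic site $x \in \partial B_{\beta r}$ with $r := n^{1/(d-1)}$, and to upgrade the resulting two-sided estimate into a strict positivity statement. The key observation is that the bound
\[
|u_n^r(x) - n G_r(x, 0)| \leq \frac{1}{4d}\Bigl|\sum_{y,z \in B_r,\, y \sim z} (G_r(y,x)-G_r(z,x))\,R_n(y,z)\Bigr| + C_1,
\]
derived just before the proof of Theorem 1.1, holds at every $x \in B_r$ and for every initial configuration, because Lemma 2.5 gives $|R_n| \leq 4d-2$ uniformly in the initial rotors.

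First I would combine $|R_n| \leq 4d-2$ with Lemma 2.6 applied with $\rho = 2r$ (so that the support condition $|x-y| \leq \rho$ already covers all of $B_r$ from any $x \in B_r$) to obtain the clean a priori bound
\[
|u_n^r(x) - n G_r(x, 0)| \leq C_3\,r + C_1,
\]
with $C_3$ depending only on $d$. Next I would substitute the Green function asymptotic cited from Lawler \cite{5}: with $|x| \in [\beta r,\, \beta r + 1)$ and $n = r^{d-1}$,
\[
n G_r(x, 0) = a_d n\bigl(|x|^{2-d} - r^{2-d}\bigr) + O\bigl(n\,|x|^{1-d}\bigr) = a_d r\bigl(\beta^{2-d} - 1\bigr) + O(\beta^{1-d}),
\]
in which the powers of $r$ cancel cleanly. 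Since $d \geq 3$ makes $2-d < 0$, the coefficient $\beta^{2-d}-1$ diverges as $\beta \downarrow 0$, so I may fix $\beta = \beta(d) \in (0,1)$ small enough that $a_d(\beta^{2-d}-1) > 2 C_3$. The two displays then combine to give
\[
u_n^r(x) \geq a_d r(\beta^{2-d}-1) - C_3 r - C_1 - O(\beta^{1-d}) \geq C_3 r - C_1 - O(\beta^{1-d}) > 0
\]
for every $n$ with $r$ exceeding a threshold $C_4=C_4(d)$.

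The remaining regime is the finitely many small $n$ with $r \leq C_4$. For these I would further shrink $\beta$ (still depending only on $d$) so that $\partial B_{\beta r}$ is disjoint from the stopping boundary $\partial B_r$, and then verify $u_n^r(x) > 0$ directly: with $r \geq 2$ every particle launched from the origin takes at least one additional step inside $B_r$ after reaching a nearest neighbor of $0$, which is enough to cover the candidate sites in $\partial B_{\beta r}$. The main obstacle I foresee is not conceptual but bookkeeping, namely ensuring that the hidden constant in the $O(\beta^{1-d})$ term is genuinely uniform in $n$ and in the initial rotor configuration, and that the final $\beta$ can be chosen to depend only on $d$ while simultaneously handling the large-$n$ main estimate and the small-$n$ edge cases.
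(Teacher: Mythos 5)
The paper does not actually prove this statement: it is quoted verbatim from Florescu--Ganguly--Levine--Peres \cite{7}, so there is no internal proof to compare against. Your large-$n$ argument is essentially the argument of \cite{7} and is sound in outline: the identity $\nabla u_n^r=-2dK_n+R_n$ with $|R_n|\leq 4d-2$ is configuration-independent, the resulting bound $|u_n^r(x)-nG_r(x,0)|\leq C_3r+C_1$ holds at every $x\in B_r$, and with $n=r^{d-1}$, $|x|\asymp\beta r$ the main term $nG_r(x,0)=a_dr(\beta^{2-d}-1)+O(\beta^{1-d})$ beats $C_3r$ once $\beta$ is small, since $2-d<0$. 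One small repair: Lemma 2.6 is stated only for $0<\rho\leq r$, so you cannot invoke it with $\rho=2r$; either apply it from the centre with $\rho=r$ after swapping the roles via symmetry of $G_r$, or handle the sites $y$ with $|x-y|>r$ separately (there $|G_r(x,y)-G_r(x,z)|=O(r^{1-d})$ and there are $O(r^d)$ such terms, contributing another $O(r)$). This only changes the constant $C_3$.

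The genuine gap is your treatment of the finitely many $n$ with $r\leq C_4$. The claim that ``every particle takes at least one additional step inside $B_r$ after reaching a nearest neighbor of $0$, which is enough to cover the candidate sites in $\partial B_{\beta r}$'' does not follow: taking more steps does not force the particles to visit \emph{all} of $\partial B_{\beta r}$. Concretely, if $\beta r<1$ then $\partial B_{\beta r}$ is the full set of $2d$ neighbours of the origin, and a site $\pm e_i$ is guaranteed to be departed from only after the rotor at $0$ has turned through the direction $\pm e_i$, i.e.\ after at least $\eta(\pm e_i)+1$ departures from $0$. For $n<2d$ and an adversarial initial configuration in which each particle stops on $\partial B_r$ after two steps, only $n$ of the $2d$ neighbours are ever visited, so $u_n^r(x)=0$ for the remaining ones and the statement as literally written is false (e.g.\ $d=3$, $n=2$). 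So no choice of $\beta$ rescues all $n$; the lemma must be read as holding for $n\geq n_0(d)$, which is exactly what your main estimate delivers and all that the application in Theorem 1.2 requires. You should state that restriction explicitly rather than attempt the small-$n$ patch.
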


Next we give the proof of Theorem 1.2.

\begin{proof}
When $d \geq 3$, let $r>R(n)$.
Because
\[u_n^r(x)-nG_r(x,0)=-\frac{1}{2d}\sum_{y \in B_r}\sum_{z \sim y}G_r(x,y)R_n(y,z).\]
By Lemma 2.6, we obtain
\begin{align*}
|u_n^r(0)-nG_r(0,0)| & \leq \sum_{y,z \in B_{R(n)},y\sim z}|G_r(y,0)-G_r(z,0)|+ \\
& \frac{1}{2d}|\sum_{y \in B_r\backslash B_{R(n)},y\sim z}G_r(y,0)R_n(y,z)|\\
& \leq CR(n)+\frac{1}{2d}|\sum_{y \in B_r\backslash B_{R(n)},y\sim z}G_r(y,0)R_n(y,z)|.\\
\end{align*}
And we know
\begin{align*}
\sum_{y \in B_r\backslash B_{R(n)},y\sim z}G_r(y,0)R_n(y,z) & =\sum_{y \in B_r\backslash B_{R(n)}}G_r(y,0)(\sum_{z\sim y }R_n(y,z))\\
& =\sum_{y \in B_r\backslash B_{R(n)}}G_r(y,0)(2d\triangle u_n^r(y)).
\end{align*}

Denote $F=\{y \in B_r\backslash B_{R(n)}: \forall z \in (B_r\backslash B_{R(n)})^c,  z \nsim y\}$.

When $y\in \bigcup_{m \in \mathbb{Z}}P_n(m) \bigcap F $ and $\forall z \in (\bigcup_{m \in \mathbb{Z}}P_n(m))^c \bigcap (B_r\backslash B_{R(n)}), y \nsim z$, $u_n^r(z)=u_n^r(y)=1$. So $\triangle u_n^r(y)=0$.

When $y\in (\bigcup_{m \in \mathbb{Z}}P_n(m))^c \bigcap F $ and $\forall z \in (\bigcup_{m \in \mathbb{Z}}P_n(m)) \bigcap (B_r\backslash B_{R(n)}),y \nsim z$, $u_n^r(z)=u_n^r(y)=0$. So $\triangle u_n^r(y)=0$.

When $y\in \bigcup_{m \in \mathbb{Z}}P_n(m) \bigcap F $ and $\exists z \in (\bigcup_{m \in \mathbb{Z}}P_n(m))^c \bigcap (B_r\backslash B_{R(n)}) $ such that $y \sim z$. Denote $M(y)$ to be the number of the sites in $(\bigcup_{m \in \mathbb{Z}}P_n(m))^c \bigcap (B_r\backslash B_{R(n)})$ connecting $y$. Under this condition $2d\triangle u_n^r(y)=-M(y)$.

When $y\in (\bigcup_{m \in \mathbb{Z}}P_n(m))^c \bigcap F $ and $\exists z \in (\bigcup_{m \in \mathbb{Z}}P_n(m)) \bigcap (B_r\backslash B_{R(n)})  $ such that $y \sim z$. Denote $W(y)$ to be the number of the sites in $(\bigcup_{m \in \mathbb{Z}}P_n(m)) \bigcap (B_r\backslash B_{R(n)})$ connecting $y$. Under this condition $2d\triangle u_n^r(y)=W(y)$.

Due to the four situations above,
\begin{align*}
& \sum_{y\in (\partial (\bigcup_{m \in \mathbb{Z}}P_n(m))) \bigcap (B_r\backslash B_{R(n)}),x \in (\bigcup_{m \in \mathbb{Z}}P_n(m)) \bigcap (B_r\backslash B_{R(n)}),x \sim y}(G_r(y,0)-G_r(x,0))\\
& =\sum_{y \in F}G_r(y,0)(2d\triangle u_n^r(y))
 +\sum_{y \in (\partial (\bigcup_{m \in \mathbb{Z}}P_n(m)))\bigcap ((B_r\backslash B_{R(n)}) \bigcap F^c)}W(y)G_r(y,0)\\
& -\sum_{z \in (\bigcup_{m \in \mathbb{Z}}P_n(m))\bigcap ((B_r\backslash B_{R(n)}) \bigcap F^c)}M(z)G_r(z,0)
\end{align*}
where $W(y),M(z)$ have the same meaning in the four situations above. We know $\forall y,z, W(y) \leq 2d, M(z) \leq 2d$. Assume that $y=x+e_k,(k \neq d)$. Then
\begin{align*}
|G_r(y,0)-G_r(x,0)|& =|G_r(x+e_k,0)-G_r(x,0)|\\
& =|a_d(|x|^{2-d}-|x+e_k|^{2-d})+O(|x|^{1-d})|\\
& \leq \frac{a_d(d-2)|x_k+\xi_k|}{|x+\xi_ke_k|^d}+O(\frac{1}{|x|^{d-1}}) \leq \frac{C_d}{|x|^{d-1}}
\end{align*}
where $\xi_k \in (0,1)$ and $C_d$ is a constant depending only on dimension $d$. Hence
\begin{align*}
& |\sum_{y\in (\partial (\bigcup_{m \in \mathbb{Z}}P_n(m))) \bigcap (B_r\backslash B_{R(n)}),x \in (\bigcup_{m \in \mathbb{Z}}P_n(m)) \bigcap (B_r\backslash B_{R(n)}),x \sim y}(G_r(y,0)-G_r(x,0))|\\
& \leq \sum_{y \in \partial P_n(0)\cap \{x_d=0\},x \in P_n(0),x \sim y}\sum_{r=0}^{\infty}|G_r(y+re_d,0)-G_r(x+re_d,0)|+\\
& \sum_{y \in \partial P_n(-1)\cap \{x_d=-1\},x \in P_n(-1),x \sim y}\sum_{r=0}^{\infty}|G_r(y-re_d,0)-G_r(x-re_d,0)|\\
& \leq \sum_{y \in \partial P_n(0)\cap \{x_d=0\},x \in P_n(0),x \sim y}\sum_{r=0}^{\infty} \frac{C_d}{|x+re_d|^{d-1}}+
 \sum_{y \in \partial P_n(-1)\cap \{x_d=-1\},x \in P_n(-1),x \sim y}\sum_{r=0}^{\infty} \frac{C_d}{|x-re_d|^{d-1}}.
\end{align*}

$\forall x \in P_n(0)$, there exists a particle which escape to infinity following the lattice line $l=\{y:y=x+ke_d, k \in \mathbb{N}\}$ after finite steps. Thus the number of the edge boundaries of $P_n(0)$ in the hyperplane $\{x: x_d=0\}$ should $\leq (2d-2)n$. For the same reason, the number of the edge boundaries of $P_n(-1)$ in the hyperplane $\{x: x_d=-1\}$ should $\leq (2d-2)n$. At the same time by Lemma 3.2 and the abelian property(Lemma 2.4), there exists a constant $\beta$ depending only on $d$ such that $u_n(x) \geq u_n^{n^{\frac{1}{d-1}}}(x)>0, \forall x \in \partial B_{\beta n^{\frac{1}{d-1}}}$. So there exists a constant $K_d$ depending only $d$ such that
\[|\sum_{y\in (\partial (\bigcup_{m \in \mathbb{R}}P_n(m))) \bigcap (B_r\backslash B_{R(n)}),x \in (\bigcup_{m \in \mathbb{R}}P_n(m)) \bigcap (B_r\backslash B_{R(n)}),x \sim y}(G_r(y,0)-G_r(x,0))| \leq K_d n^{\frac{1}{d-1}}.\]
Also
\begin{align*}
|\sum_{y \in (B_r\backslash B_{R(n)})\bigcap F^c,y\sim z}G_r(y,0)R_n(y,z)| & \leq \sum_{y \in (B_r\backslash B_{R(n)})\bigcap F^c}G_r(y,0)|\sum_{z\sim y }R_n(y,z)|\\
& \leq 8d^2 \sum_{y \in (B_r\backslash B_{R(n)})\bigcap F^c }G_r(y,0).
\end{align*}
Thus
\[|\sum_{y \in B_r\backslash B_{R(n)},y\sim z}G_r(y,0)R_n(y,z)| \leq K_dn^{\frac{1}{d-1}}+(8d^2+2d)\sum_{y \in (B_r\backslash B_{R(n)})\bigcap F^c }G_r(y,0).\]
We know that $(B_r\backslash B_{R(n)})\bigcap F^c=\partial B_{R(n)} \bigcup \partial (B_r^c)$.
$\forall r \in \mathbb{N}$, there exists a constant $C_3$ depending only on $d$ such that
\[\sum_{y \in \partial (B_r^c)}G_r(y,0)=E_0\#\{j \geq 0:X_j \in \partial (B_r^c)\}< C_3.\]
And because $\partial B_{R(n)} \subseteq S_{R(n)}$,
\[\sum_{y \in \partial B_{R(n)}}G_r(y,0) \leq \sum_{y \in S_{R(n)}}G(y,0) \leq C_4R(n)^{2-d}\cdot R(n)^{d-1}=C_4R(n).\]
We could obtain
\[|u_n^r(0)-nG_r(0,0)| \leq CR(n)+\frac{K_d}{2d} n^{\frac{1}{d-1}}+(C_4R(n)+C_3)(4d+1).\]

Since $r>R(n)$ and $\bigcup_{-h_n^- \leq r \leq h_n^+}P_n(r)\subseteq B_{R(n)}$, we have $u_n^r(0)=u_n(0)$. Let $r \rightarrow \infty$, divided by $n$ on both sides and use Lemma 3.1. Hence
\[\limsup_{n \to \infty}|\frac{u_n(0)}{n}-G(0,0)| =0.\]
So
\[\lim_{n \to \infty}\frac{n}{u_n(0)}=\alpha_d.\]
By Lemma 2.3, we could draw the conclusion of Theorem 1.2.
\end{proof}

\section{Outer estimate of rotor-router aggregation}

The estimate for rotor-router aggregation $A_n$ originates from L.Levine,Y.Peres \cite{6}. But there is a mistake in their original paper. In a personal communication with Lionel Levine, he told us a method to fix the problem. His new method could also get the outer estimate $A_n \subseteq B_{r+C^\prime r^{1-\frac{1}{d}}\log r}$ where $C^\prime$ is a constant depending only on dimension $d$. However in this problem we do not need that strong outer estimate. For a self-contained reason we follow the proof of L.Levine,Y.Peres \cite{6}. But when we handle with the iteration in the outer estimate we would not use Lionel Levine's new method and we get a relatively weaker outer estimate.
The next lemma is an unpublished result of Holroyd and Propp. Also, L.Levine,Y.Peres \cite{6} cited this lemma.
\begin{lemma}
(Holroyd and Propp) $\Gamma =(V,E)$ is a finite connected graph and $Y\subseteq Z\subseteq V$. On each site x there are $s(x)$ particles. If these particles perform independent random walks until hitting $Z$. Let $T$ be the hitting time of $Z$. $H_w(s,Y):=\sum_{x \in V}s(x)\mathbb{P}_x(X_T \in Y)$, namely the expecting number of particles stopping on $y$. If $\Gamma$ has an initial rotor configuration and these $s(x)$ particles on each site x of the graph perform rotor walk until hitting $Z$. Denote $H_r(s,Y)$ to be the number of particles on $Y$. Also let $H(x)=H_w(1_x,Y)$. We could obtain
\[|H_r(s,Y)-H_w(s,Y)| \leq \sum_{u \in V\backslash Z}\sum_{v\sim u}|H(u)-H(v)|.\]
\end{lemma}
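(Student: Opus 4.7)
The plan is to interpret the two quantities $H_w(s,Y)$ and $H_r(s,Y)$ as the initial and final values of a single functional carried by the particle population, and then to bound their discrepancy by the rotor mechanism's deviation from exact averaging. First I would introduce
\[
H(x) \; := \; H_w(1_x, Y) \; = \; \mathbb{P}_x(X_T \in Y),
\]
which, by the strong Markov property, is discrete harmonic on $V \setminus Z$ and satisfies the boundary conditions $H \equiv 1$ on $Y$ and $H \equiv 0$ on $Z \setminus Y$. By linearity in the starting distribution, $H_w(s,Y) = \sum_{x \in V} s(x) H(x)$.

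For the rotor process, I would track edge counts: let $N(u,v)$ be the number of times a particle traverses the directed edge $(u,v)$, and set $N(u) := \sum_{v \sim u} N(u,v)$, the total number of firings at $u$. Since the rotor at $u$ cycles through all $\deg(u)$ neighbors in a fixed order, the $N(u)$ firings distribute across the neighbors as evenly as possible, independently of the starting rotor direction, giving the pointwise near-uniformity bound
\[
\Bigl|\, N(u,v) - \tfrac{N(u)}{\deg(u)} \,\Bigr| \;\leq\; 1
\]
for every directed edge. Using the conservation laws (particles entering plus initial supply equal those leaving at $u \notin Z$, and equal the final count at $u \in Z$) together with $H = 1_Y$ on $Z$, a direct accounting shows that the net ``$H$-flux'' along all rotor moves records exactly the change in total $H$-mass from start to finish:
\[
\sum_{u \in V \setminus Z} \sum_{v \sim u} N(u,v)\,\bigl(H(v) - H(u)\bigr) \;=\; H_r(s,Y) - H_w(s,Y).
\]

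Finally, because $H$ is discrete harmonic on $V \setminus Z$, one has $\sum_{v \sim u}(H(v) - H(u)) = 0$ for every $u \notin Z$, so I may subtract $N(u)/\deg(u)$ times this vanishing sum inside each $u$-block of the identity above without altering the left-hand side. This rewrites the flux as
\[
H_r(s,Y) - H_w(s,Y) \;=\; \sum_{u \in V \setminus Z}\sum_{v \sim u}\Bigl(N(u,v) - \tfrac{N(u)}{\deg(u)}\Bigr)\bigl(H(v) - H(u)\bigr),
\]
and the triangle inequality combined with the rotor bound $|N(u,v) - N(u)/\deg(u)| \leq 1$ yields the stated inequality.

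The main obstacle I anticipate is the telescoping identity for the $H$-flux: one must verify carefully that the double sum of $N(u,v)(H(v) - H(u))$ over all directed edges $(u,v)$ with $u \notin Z$ collapses to exactly $H_r(s,Y) - H_w(s,Y)$, which relies on balancing in-flow versus out-flow at each vertex, on killing the surviving boundary terms via the explicit values $H|_Y \equiv 1$ and $H|_{Z \setminus Y} \equiv 0$, and on the fact that the rotor process terminates in finitely many steps so that all sums are finite. The remaining ingredients --- harmonicity of $H$, the near-uniform distribution of rotor exits, and the triangle inequality --- are routine.
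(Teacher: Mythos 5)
Your proof is correct. Note first that the paper itself gives no proof of this lemma: it is quoted as an unpublished result of Holroyd and Propp, also cited in Levine--Peres, so strictly speaking there is nothing in the paper to compare against line by line. The closest thing in the paper is the proof of Lemma 6.1, which redoes this argument in a specific setting using the standard Holroyd--Propp \emph{weight-transfer} formulation: each particle at $x$ carries weight $H(x)$, each rotor carries a weight recording its cumulative deviation from averaging, and the sum of particle weights plus rotor weights is invariant under every single rotor move, so the discrepancy between initial and final particle weight is absorbed by the total change in rotor weights. Your argument is the ``integrated'' version of the same idea: instead of a step-by-step conserved quantity you work with the terminal edge-traversal counts $N(u,v)$, derive the flux identity $\sum_{u\notin Z}\sum_{v\sim u}N(u,v)(H(v)-H(u))=H_r(s,Y)-H_w(s,Y)$ by summation by parts using conservation of particles and $H=1_Y$ on $Z$, and then exploit harmonicity of $H$ off $Z$ to replace $N(u,v)$ by $N(u,v)-N(u)/\deg(u)$, which the cyclic rotor mechanism bounds by $1$ in absolute value independently of the initial rotor directions. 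Both routes rest on exactly the same two facts (harmonicity of $H$ and near-uniformity of rotor firings), and your static bookkeeping is arguably cleaner to verify than the dynamic invariant; the dynamic version has the minor advantage of applying verbatim to intermediate (unfinished) stages of the rotor process, which is how the paper uses it in Section 6. The two points you flag as needing care --- finite termination of the rotor process on a finite graph with sink $Z$, and the collapse of the boundary terms via $H\equiv 1$ on $Y$ and $H\equiv 0$ on $Z\setminus Y$ --- are indeed the only places where something could go wrong, and both check out.
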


The next two lemmas are from L.Levine,Y.Peres \cite{6}.
\begin{lemma}
$\rho \geq 1,y \in S_\rho$. For $x \in B_\rho$, let $H(x)=\mathbb{P}_x(X_T=y)$, where $T$ is the hitting time of $S_\rho$. Then
\[H(x) \leq \frac{J}{|x-y|^{d-1}}\]
where $J$ is a constant depending only on dimension $d$.
\end{lemma}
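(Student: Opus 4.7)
The plan is to reduce the hitting-probability bound to a Green-function estimate on the walk killed at $S_\rho$ and then invoke the classical whole-space Green-function asymptotic stated just before Lemma 2.7.

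First, I would perform a last-step decomposition. Let $G_\rho(x, z) := \mathbb{E}_x \#\{t < T : X_t = z\}$ be the Green function of the walk killed on $S_\rho$. Since $X_T = y \in S_\rho$ forces $X_{T-1}$ to be a neighbor of $y$ lying in $B_\rho$, conditioning on that position (and on the step to $y$, which has conditional probability $1/(2d)$) gives
\[
H(x) \;=\; \frac{1}{2d} \sum_{z \sim y,\; z \in B_\rho} G_\rho(x, z).
\]
Because $y$ has at most $2d$ neighbors, it suffices to prove $G_\rho(x, z) \leq C/|x-y|^{d-1}$ for each such $z$, with $C$ depending only on $d$.

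Second, I would bound $G_\rho(x, z)$ by comparing with the whole-space Green function $G$. Using the identity $G_\rho(x, z) = G(x, z) - \mathbb{E}_x[G(X_T, z)]$ together with the asymptotic $G(u, v) = a_d|u-v|^{2-d} + O(|u-v|^{1-d})$, I would exploit the fact that $z$ lies one lattice step inside the boundary: the leading term $G(x, z) \sim a_d|x-z|^{2-d}$ is cancelled by the harmonic correction $\mathbb{E}_x[G(X_T, z)]$ to leading order, leaving a remainder of order $|x - z|^{1-d}$. Since $|x - z| \geq |x-y| - 1$ (and the case $|x-y| \leq 2$ is handled by the trivial bound $G_\rho(x, z) \leq G(z, z) = O(1)$), this closes the estimate. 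The mechanism is transparent in the continuous setting, where the method of images for the ball Green function yields $g_\rho(x, z) = O\bigl((\rho - |z|)/|x-z|^{d-1}\bigr) = O(1/|x-y|^{d-1})$ directly.

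The main obstacle is making the cancellation rigorous on the discrete lattice. The random boundary value $G(X_T, z)$ is as large as $O(1)$ when $X_T$ lies within $O(1)$ of $z$ (equivalently, of $y$), so a term-by-term bound on the expectation is too crude and one must exploit the full harmonic structure. One clean route is to transfer the continuous image-Green-function estimate to the lattice by coupling simple random walk with Brownian motion in an annular region around $y$. Another is to quote a pointwise discrete Poisson kernel estimate from classical discrete potential theory, such as Lawler's \emph{Random Walk: A Modern Introduction}. The $d = 2$ version is analogous, with the potential kernel $a(\cdot)$ replacing $G$.
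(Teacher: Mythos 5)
The paper does not actually prove this lemma: it is quoted from Levine and Peres \cite{6} (the sentence immediately preceding it reads ``The next two lemmas are from L.Levine, Y.Peres''), so there is no in-paper argument to compare yours against. Judged on its own, your first step is correct and is the standard reduction: since a nearest-neighbour walk started in $B_\rho$ cannot leave $B_\rho\cup S_\rho$ in one step, the walk stays in $B_\rho$ strictly before $T$, and the last-step decomposition $H(x)=\frac{1}{2d}\sum_{z\sim y,\,z\in B_\rho}G_\rho(x,z)$ is valid; it then suffices to show $G_\rho(x,z)\le C|x-y|^{1-d}$ for each such $z$, which is the right target since $z$ sits within distance $1$ of the boundary.

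The gap is in how you propose to obtain that bound. The identity $G_\rho(x,z)=G(x,z)-\mathbb{E}_x[G(X_T,z)]$ is fine, but to see the leading term $a_d|x-z|^{2-d}$ cancel up to an error $O(|x-z|^{1-d})$ you must evaluate $\mathbb{E}_x[G(X_T,z)]$ to precision $|x-y|^{1-d}$. Since $G(\cdot,z)$ is of order $1$ at the boundary points adjacent to $y$ and of order $|u-z|^{2-d}$ at boundary points $u$ at distance $k$ from $y$, such control of the expectation is equivalent to knowing that the exit measure of $x$ puts mass $O(k^{d-1}\cdot|x-y|^{1-d})$ on the cap of $S_\rho$ within distance $k$ of $y$ --- which is precisely the content of the lemma. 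So the ``cancellation'' mechanism is circular as stated, and your two fallbacks either demand substantial extra work (a KMT-type coupling is lossy exactly in the boundary layer where the factor $\rho-|z|=O(1)$ must be captured) or reduce the proof to a citation of the discrete Poisson kernel bound, which is no more than what the paper already does by citing \cite{6}. A non-circular way to close the second step: by reversibility $G_\rho(x,z)=G_\rho(z,x)$, and the walk started at $z$ must reach $\partial B(y,|x-y|/2)$ before it can visit $x$; because $z$ is adjacent to $S_\rho$, a gambler's-ruin comparison (optional stopping with the near-harmonic function $w\mapsto|w|^{2-d}-\rho^{2-d}$) shows this happens before $T$ with probability $O(1/|x-y|)$, and from the first such point the expected number of visits to $x$ is at most $G(\cdot,x)=O(|x-y|^{2-d})$. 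The product gives the claimed $O(|x-y|^{1-d})$; this is the standard route in the discrete potential-theory literature.
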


\begin{lemma}
The definition of $H(x)$ is the same as the above lemma. We could also obtain
\[\sum_{u \in B_\rho}\sum_{v \sim u}|H(u)-H(v)| \leq J^\prime \log \rho\]
where $J^\prime$ is a constant depending only on $d$.
\end{lemma}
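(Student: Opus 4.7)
The plan is to combine Lemma 4.2 with a discrete gradient estimate for nonnegative harmonic functions, then sum the resulting bounds shell-by-shell around $y$. First extend $H$ by setting $H(y)=1$ and $H\equiv 0$ on $S_\rho\setminus\{y\}$, so that the extension is nonnegative and discretely harmonic on $B_\rho$. For each $u\in B_\rho$ write $k=|u-y|$ and $s=\rho-|u|$, and observe that $s\leq k$ since $y\in S_\rho$. The main tool is the discrete Harnack gradient estimate: if $H$ is a nonnegative discrete harmonic function on a ball $B(u,R)\subset\mathbb{Z}^d$, then $|H(u)-H(v)|\leq C_d H(u)/R$ for every neighbor $v$ of $u$.

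Take $R=\lfloor\min(k,s)/8\rfloor$, so that $B(u,R)\subset B_\rho\setminus\{y\}$ whenever $R\geq 1$; combined with Lemma 4.2 this yields $\sum_{v\sim u}|H(u)-H(v)|\leq C/(k^{d-1}\min(k,s))$. In the \emph{interior} regime $s\geq k/8$, the right-hand side is at most $C/k^d$; since the number of lattice points with $k\leq|u-y|<k+1$ is $O(k^{d-1})$, each shell contributes $O(1/k)$, and summing over $k=1,\ldots,O(\rho)$ gives $O(\log\rho)$. Edges with $v\in S_\rho$ and any residual points with $\min(k,s)<8$ are handled directly from Lemma 4.2: the count of such $u$ with $|u-y|$ near $k$ is $O(k^{d-2})$ and each edge contributes $O(1/k^{d-1})$, so this also sums to $O(\log\rho)$.

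The main obstacle is the \emph{boundary} regime $s<k/8$: the naive gradient bound $C/(k^{d-1}s)$ alone would sum to $O(\log^2\rho)$. To recover $O(\log\rho)$, the vanishing boundary condition on $S_\rho\setminus\{y\}$ must be exploited to sharpen the pointwise estimate of $H$ near $\partial B_\rho$. Concretely, one proves a Poisson-kernel-type bound $H(u)\leq C(s/k)^\alpha/k^{d-1}$ for some $\alpha=\alpha(d)>0$ by iterating the inequality $H(u)\leq(1-c)\max_{\partial B(u,2s)} H$, which is valid because from $u$ at distance $s$ from $S_\rho$ the random walk hits $S_\rho$ before escaping $B(u,2s)$ with probability at least $c(d)>0$; equivalently, one may invoke the classical discrete Poisson kernel estimate $H(u)\leq Cs/k^d$ from standard potential theory. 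Substituting back into the Harnack gradient estimate, the boundary regime contributes at most $\sum Cs^{\alpha-1}/k^{d-1+\alpha}$, which after summing $1\leq s\leq k$ and $1\leq k\leq O(\rho)$ reduces to a constant multiple of $\sum_k 1/k=O(\log\rho)$, completing the proof.
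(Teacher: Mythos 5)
The paper does not actually prove this lemma --- it is quoted from Levine--Peres \cite{6} without proof --- so there is no in-paper argument to compare against; judged on its own, your proof is correct and is essentially the standard potential-theoretic argument behind the cited result. You correctly combine the pointwise bound $H(u)\leq J|u-y|^{1-d}$ of Lemma 4.2 with the Harnack-type gradient estimate on balls of radius $\min(|u-y|,\rho-|u|)$, and, crucially, you notice that this alone only yields $O((\log\rho)^2)$ near $S_\rho$ and repair it with a Poisson-kernel-type decay $H(u)\lesssim (s/k)^{\alpha}k^{1-d}$ obtained by iterating a one-step boundary-hitting estimate; that is exactly the right fix, and the shell counts ($O(k^{d-1})$ points per sphere $|u-y|\approx k$, and $O(k^{d-2})$ per pair $(k,s)$ in the boundary layer, the latter used implicitly in your final sum) make all three regimes sum to $O(\log\rho)$.
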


The next estimate is weaker than the outer estimate of $A_n$ in L.Levine,Y.Peres \cite{6}. The method is also from L.Levine,Y.Peres \cite{6}.
\begin{lemma}
In $\mathbb{Z}^d$, $A_n$ is the sites occupied by particles after n particles finish their rotor-router aggregation. $r=(\frac{n}{\omega_d})^{\frac{1}{d}}$, we could obtain
\[A_n \subseteq B_{Cr(\log r)^d}\]
where $C$ is a constant depending only on dimension $d$.
\end{lemma}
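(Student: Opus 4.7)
The approach follows Levine and Peres \cite{6}: bound the extent of the aggregate $A_n$ by controlling the number of particles that can reach each sphere $S_\rho$ with $\rho > r$, and iterate the control across scales. The three inputs are Lemma 4.1 (Holroyd--Propp comparison of rotor vs.\ random hitting distributions), Lemma 4.2 (pointwise hitting-probability bound), and Lemma 4.3 (summed discrepancy bound), together with the abelian property (Lemma 2.4). At a single scale, fix $\rho > r$ and use the abelian property to reorganize the aggregation so that each particle either joins the aggregate inside $B_\rho$ or reaches $S_\rho$ before being absorbed. Apply Lemma 4.1 on $B_\rho$ with source $s = n\mathbf{1}_0$, stopping set $Z = S_\rho$, and target $Y = \{y\}$ to compare the rotor count $m_\rho(y)$ of particles arriving at $y \in S_\rho$ with the random-walk expectation $n\mathbb{P}_0(X_T = y)$. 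By Lemma 4.2 the latter is at most $nJ/\rho^{d-1}$, and by Lemma 4.3 the discrepancy is at most $J'\log\rho$, so $m_\rho(y) \leq nJ/\rho^{d-1} + J'\log\rho$.

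For the iteration, set $\rho_0 = r$ and choose an increasing sequence $\rho_1 < \rho_2 < \cdots$. Write $N_k := \sum_y m_{\rho_k}(y)$ for the total mass reaching $S_{\rho_k}$. Restart the process from $S_{\rho_k}$ and apply Lemma 4.1 again in $B_{\rho_{k+1}}$ with sources distributed on $S_{\rho_k}$. For each target $y' \in S_{\rho_{k+1}}$, the hitting probability from any source $x \in S_{\rho_k}$ is at most $J/(\rho_{k+1}-\rho_k)^{d-1}$ by Lemma 4.2, and the error is again $O(\log\rho_{k+1})$; summing over targets yields
\[
N_{k+1} \;\leq\; |S_{\rho_{k+1}}|\,\Bigl(\frac{J\,N_k}{(\rho_{k+1}-\rho_k)^{d-1}} + J'\log\rho_{k+1}\Bigr).
\]
Since every site of $A_n\setminus B_{\rho_k}$ is filled by a particle that crossed $S_{\rho_k}$, we have $|A_n\setminus B_{\rho_k}| \leq N_k$; once $N_k < 1$ this forces $A_n\subseteq B_{\rho_k}$. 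Tuning the sequence $\{\rho_k\}$ so that $N_k$ drops below $1$ after roughly $\log r$ stages produces a terminal radius $\rho_k = O(r(\log r)^d)$.

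The main obstacle is the iteration bookkeeping: each stage contributes both a multiplicative factor from $|S_{\rho_{k+1}}|/(\rho_{k+1}-\rho_k)^{d-1}$ and an additive error of order $|S_{\rho_{k+1}}|\,J'\log\rho_{k+1}$. Choosing the shell increments too small lets the additive log errors accumulate over too many stages, while choosing them too large prevents the multiplicative factor from being bounded below $1$; balancing these constraints across $O(\log r)$ stages is precisely what yields the $(\log r)^d$ inflation of the radius, weaker than the sharpened $O(r^{1-1/d}\log r)$ estimate of the refined Levine--Peres method but sufficient for the outer estimate needed in the present paper.
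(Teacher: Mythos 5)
Your single-scale input is right, but the way you combine it across scales does not close. The recursion
\[
N_{k+1} \;\leq\; |S_{\rho_{k+1}}|\Bigl(\frac{J\,N_k}{(\rho_{k+1}-\rho_k)^{d-1}} + J'\log\rho_{k+1}\Bigr)
\]
can never drive $N_k$ below $1$: the additive term alone is of order $\rho_{k+1}^{d-1}\log\rho_{k+1}\gtrsim r^{d-1}\log r$, because the $J'\log\rho$ error of Lemma 4.3 is incurred once per target site $y$ and you are summing it over all $\approx c_d\rho_{k+1}^{d-1}$ sites of the sphere. Even the multiplicative factor $|S_{\rho_{k+1}}|\cdot J/(\rho_{k+1}-\rho_k)^{d-1}\approx c_dJ\bigl(\rho_{k+1}/(\rho_{k+1}-\rho_k)\bigr)^{d-1}$ is bounded below by a positive constant, so it is not a contraction either. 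No tuning of the sequence $\{\rho_k\}$ makes this iteration terminate with $N_k<1$, and you never invoke the one constraint that actually forces the aggregate to be small, namely $|A_n|=n$.

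The paper closes the argument by inverting the per-site bound rather than summing it over the sphere. Since each occupied site $y\in S_{\rho+h}\cap A_n$ receives at most $H_r(s,y)\leq JN_\rho/h^{d-1}+J'\log(\rho+h)$ particles and all $N_{\rho+h}$ particles reaching $S_{\rho+h}$ land on such sites, the number of occupied sites satisfies
\[
M_{\rho+h}:=\#(A_n\cap S_{\rho+h})\;\geq\;\frac{N_{\rho+h}}{JN_\rho/h^{d-1}+J'\log(\rho+h)},
\]
which in the regime where $N_\rho$ is still large (above the thresholds $s(1),\dots,s(d-1)$) gives $M_{\rho(i)+h}\gtrsim h^{d-1}$. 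Summing over $h$ and over the $O(\log r)$ dyadic scales $\rho(i)$ at which $N$ halves, and combining $\sum_\rho M_\rho\leq n=\omega_dr^d$ with H\"older's inequality, bounds $\sum_i(\rho(i+1)-\rho(i))$ and hence the radius at which $N_\rho$ first drops below $\rho^{d-1}\log\rho$; repeating with successively weaker thresholds $d-1$ times yields $s(d-1)\leq Cr(\log r)^{d-1}$ with at most $Cr(\log r)^{d}$ particles remaining beyond it, each of which can extend the connected cluster by at most one unit of radius. The mechanism you are missing is ``few particles per site $\Rightarrow$ many occupied sites per shell $\Rightarrow$ the volume budget $n$ is exhausted''; an iteration phrased purely in terms of the surviving mass $N_k$ cannot succeed with the available error bounds.
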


\begin{proof}
For $h \geq 1$, let $\Gamma$ in Lemma 4.1 to be $B_{\rho+h+1}$ and $Z=S_{\rho+h}$. First we fix a $y \in S_{\rho+h}$ and let $Y=\{y\}$. $n$ particles in turn perform rotor walk from the origin 0 and stop until either stepping onto an unoccupied site by the previous particles or $S_{\rho+h}$. Denote $s(x)$ to be the number of the particles stopping on $x \in S_\rho$ and $H(x)=\mathbb{P}_x(X_T=y)$, where $T$ is the hitting time of $S_{\rho+h}$ for random walk. Denote $N_\rho$ to be the particles on $S_\rho$. By Lemma 4.2,
\[H_w(s,y)=\sum_{x \in S_\rho}s(x)H(x) \leq \frac{JN_\rho}{h^{d-1}}.\]
By Lemma 4.3 we obtain
\[\sum_{u \in B_{\rho+h}}\sum_{v\sim u}|H(u)-H(v)| \leq J^\prime \log(\rho+h).\]
Because of Lemma 4.1,
\[H_r(s,y) \leq \frac{JN_\rho}{h^{d-1}}+J^\prime \log (\rho+h).\]

In the original paper of L.Levine,Y.Peres \cite{6}, $J^\prime \log (\rho+h)$ in the above inequality was mistakenly witten as $J^\prime \log h$. Lionel Levine gave a fix to the problem in a personal communication. He could also get his previous outer estimate. We only need a weaker estimate and thus we will not follow his new method. But the following is similar to their original proof.

Let $\rho_0=0,\rho_{i+1}=\min \{\rho>\rho(i): N_\rho \leq \frac{N_{\rho(i)}}{2}\}$. Because of the abelian property of rotor walk, $n$ particles in turn perform rotor walk starting from the origin 0 until either entering into a site which has never been visited by the previous particles or hitting $S_{\rho(i)}$. Then let $N_{\rho(i)}$ particles on $S_{\rho(i)}$ continue to perform rotor walk until either entering into a site which has never been visited by the previous particles or hitting $S_{\rho(i)+h}$. At this time the number of the particles stop on $S_{\rho(i)+h}$ is exactly $N_{\rho(i)+h}$. Thus we could obtain
\[N_{\rho(i)+h} \leq \sum_{y \in S_{\rho(i)+h} \cap A_n}H_r(s,y).\]
Let $M_k=\# (A_n\bigcap S_k)$, so
\[M_{\rho(i)+h} \geq N_{\rho(i)+h} \frac{1}{\frac{JN_{\rho(i)}}{h^{d-1}}+J^\prime \log (\rho(i)+h)}.\]
Let $s(1)=\min \{\rho: N_\rho \leq \rho^{d-1}\log \rho\}, s(2)=\min \{\rho \geq s(1): N_\rho \leq \rho^{d-2}\log \rho\},\dots ,s(d-1)=\min \{\rho \geq s(d-2): N_\rho \leq \rho \log \rho\}$. Let $k(1)=\min \{i>0:\rho(i)<s(1)\}$. Also let $\rho(k(1)+1)=s(1)-1$. So when $0\leq i\leq k(1)$ and $1 \leq h \leq \rho(i+1)-\rho(i)-1$, $\rho(i)+h<s(1)$. Hence we could obtain
\[M_{\rho(i)+h} \geq \frac{h^{d-1}}{2J+J^\prime}.\]
Thus there exists a constant $K$ depending only on dimension $d$ such that
\[\sum_{\rho=\rho(i)+1}^{\rho(i+1)-1}M_\rho \geq K(\rho(i+1)-\rho(i))^d.\]
Let $x_i= \rho(i+1)-\rho(i)$. We know $\sum_{0 \leq \rho \leq s(1)}M_\rho \leq \omega_d r^d$. So
\[\sum_{i=0}^{k(1)}x_i^d \leq Cr^d.\]
We obtain
\[\rho(k(1)+1)=s(1)-1=\sum_{i=0}^{k(1)}x_i \leq (\sum_{i=0}^{k(1)}x_i^d)^{\frac{1}{d}}k(1)^{1-\frac{1}{d}} \leq Cr(\log r)^{1-\frac{1}{d}}.\]
The reason why the last inequality holds is that $N_{\rho(k)} \leq \frac{\omega_dr^d}{2^k}$ and there exist a constant $a>0$ such that $N_{\rho(a\log r)}=0$. Thus $s(1) \leq Cr\log r$¡£

Next would change the meaning of some symbols.
Let $\rho(0)=s(1)$ and as the method above, let $k(2)=\min \{i>0:\rho(i)<s(2)\}$, and let $\rho(k(2)+1)=s(2)-1$. So when $0\leq i\leq k(2), 1 \leq h \leq \rho(i+1)-\rho(i)-1$, we have $\rho(i)+h<s(2)$. We can obtain
\[M_{\rho(i)+h} \geq \frac{h^{d-2}}{2J+J^\prime}.\]
Thus
\[\sum_{\rho=\rho(i)+1}^{\rho(i+1)-1}M_\rho \geq C(\rho(i+1)-\rho(i))^{d-1}\]
Similarly let $x_i=\rho(i+1)-\rho(i)$ and we could know
\[\sum_{i=0}^{k(1)}x_i^{d-1} \leq s(1)^{d-1}\log s(1) \leq Cr^{d-1}(\log r)^{d-1+\frac{1}{d}}.\]
So
\[s(2)-1-s(1)=\sum_{i=0}^{k(2)}x_i \leq (\sum_{i=0}^{k(2)}x_i^{d-1})^{\frac{1}{d-1}}k(2)^{1-\frac{1}{d-1} }\leq Cr(\log r)^{2-\frac{1}{d}} \leq Cr(\log r)^2\]
We obtain $s(2) \leq Cr(\log r)^2$. Similarly keep using this method for another $d-3$ times we obtain $s(d-1) \leq Cr(\log r)^{d-1}$. But $N_{s(d-1)} \leq s(d-1)\log [s(d-1)] \leq Cr(\log r)^d$. So we could know $A_n\subseteq B_{Cr(\log r)^d}$.
\end{proof}

\section{Estimates for height}

The next two sections are devoted to prove Lemma 3.1.

\begin{lemma}
When $d \geq 3$ and the initial configuration is $\rho_0$, $n$ particles in turn perform rotor walk starting from 0 until escaping to infinity. Then there exists a constant $C$ depending only on dimension $d$ such that
\[h_n^+ \leq Cn^{\frac{2}{3}}(\log n)^2, h_n^- \leq Cn^{\frac{2}{3}}(\log n)^2.\]
\end{lemma}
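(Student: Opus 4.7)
The strategy is to bound the multiply-visited region $T_n := \{x \in \mathbb{Z}^d : \text{the first } n \text{ escape-particles visit } x \text{ at least twice}\}$ by an auxiliary rotor-router aggregation cluster, and then invoke Lemma 4.5. By the remark following Lemma 2.1, $h_n^+$ is the maximum $d$-th coordinate occurring in $T_n$, and the problem is symmetric under the reflection $e_d \leftrightarrow -e_d$, so it suffices to establish the bound for $h_n^+$.

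First I would catalog the geometry of $T_n$ in the spirit of the proof of Theorem 1.1. By Lemma 2.1, each of the $n$ particles eventually escapes by following $+e_d$ or $-e_d$; hence every column $\{x + k e_d : k\in \mathbb{Z}\}$ through a site of $T_n$ corresponds to at least one escape event. This forces the projection of $T_n$ onto $H_{d-1}(0) \cup H_{d-1}(-1)$ to contain at most $2n$ points, exactly mimicking the $|x_1|\le n$ estimate obtained in the $d=2$ case at the beginning of Section 2.

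Next I would use the abelian property (Lemma 2.4) to compare the escape process to a rotor-router aggregation: starting from $\rho_0$, launch particles from the origin and stop each one as soon as it first steps onto a site not yet visited by any previous particle. Write $A_N$ for the resulting cluster after $N$ particles. By abelian invariance, rearranging the order of particle moves in the escape process shows that $T_n \subseteq A_N$ for a suitable $N$. A counting argument keyed to the column structure of Step 1 (each site carries only $2d$ rotor positions, and every internal rotor flip must eventually be balanced by a particle exiting the column in a non-$\pm e_d$ direction) bounds the required number of aggregation particles by a polynomial in $n$, of the form $N \leq C_d n^{2d/3}$.

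Finally, I would apply Lemma 4.5 to the aggregation with $N \leq C_d n^{2d/3}$ particles, so that $T_n \subseteq A_N \subseteq B_{C r (\log r)^d}$ with $r = (N/\omega_d)^{1/d} \leq C n^{2/3}$. This yields $h_n^+ \leq C n^{2/3}(\log n)^d$, and a refinement of the iterative scheme in the proof of Lemma 4.5 — exploiting that for $d\geq 3$ the harmonic measure from $S_\rho$ decays like $h^{1-d}$ rather than merely logarithmically — absorbs the extra $(\log n)^{d-2}$ factor and produces the claimed bound $h_n^+ \leq C n^{2/3}(\log n)^2$. The bound on $h_n^-$ follows by symmetry.

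The main obstacle is the rotor-flip counting in the second step: one must rigorously deduce from the abelian property that only polynomially many aggregation particles are needed to cover $T_n$. This requires careful bookkeeping of the excursions each escape-particle makes through $T_n$ before launching into its terminal $\pm e_d$ column, and I expect this is where the proof becomes most technical and where the precise exponent $2d/3$ has to be extracted.
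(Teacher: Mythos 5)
There is a genuine gap at the heart of your second step. The containment $T_n \subseteq A_N$ with $N \le C_d n^{2d/3}$ is asserted, not proved, and it is not a consequence of the abelian property as stated in Lemma 2.4: the escape process (stop on hitting $\{0\}$ or $\infty$) and the aggregation process (stop on first entering an unvisited site) use different stopping sets $W$, so reordering moves cannot convert one into the other. Worse, the step is essentially circular. Since $A_N$ is comparable to a ball of radius $N^{1/d}$, covering a set of height $h$ requires $N \gtrsim h^d$; choosing $N \asymp n^{2d/3}$ is exactly the assertion $h_n^+ \lesssim n^{2/3}$ that you are trying to prove, and the ``rotor-flip counting'' you defer to would have to produce that height bound from scratch. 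The column-counting in your first step only bounds the number of columns (at most $n$, as in the paper), not how far the multiply-visited set extends along a column, which a priori is only $O(n)$ by Lemma 2.2. Your final step is also hand-waved: the $(\log n)^2$ in the statement does not come from sharpening the $(\log r)^d$ in the outer estimate of Lemma 4.4.

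The paper's actual mechanism is different and worth internalizing. It keeps $n$ fixed, runs rotor-router aggregation to get $A_n \subseteq B_{Kn^{1/3}\log n}$ and a configuration $\rho'$, and then builds a chain of height comparisons $h_n^+ \le h^+(1_{A_n},\rho') \le h^+(G,\rho') \le h^+(2dG,\rho_0)$, each step justified by the abelian property applied to a common stopping hyperplane set, where $G$ places one particle on every site of $B_{Kn^{1/3}\log n}$ and the factor $2d$ lets one rotate every rotor from $\rho'$ back to $\rho_0$. It then quotients (``curls'') all but two coordinates into finite cyclic factors, so that the configuration projects to a two-dimensional rotor walk started from roughly $(n^{1/3}\log n)^2$ particles; the trivial two-dimensional height bound $h^{2+}_m \le m$ (Lemma 2.2) then gives $h_n^+ \le C(n^{1/3}\log n)^2 = Cn^{2/3}(\log n)^2$, which is exactly where the exponent and the $(\log n)^2$ come from. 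For $d>3$ an induction on dimension via the same curling reduces to $d=3$. Your proposal contains the right ingredients (aggregation, abelian property, column structure) but lacks the dimension-reduction idea that makes the bound non-circular.
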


\begin{proof}
Let $h_n^{d+},h_n^{d+},I^{(d)}(\rho,n), u_n^{(d)}(0),x_i^{(d)}$ to be the corresponding $h_n^+,h_n^-,I(\rho,n), u_n(0), x_i$ in $d$-dimensional case.
Below is an extension of the definition of $h_n^+, h_n^-$. If there is a map $R:\mathbb{Z}^d \rightarrow \mathbb{N}, R(x)$ denote to be the number of particles on $x$, while $\rho$ represents the rotor configuration corresponding to the particle distribution $R$. $(R,\rho)$ indicates a state of the rotor walk. If we let $(R,\rho)$ perform rotor walk until escaping to infinity, denote $h^+(R,\rho)$ to be the maximal $d$th-dimensional coordinate of the sites from which have been exited by the particles at least twice. Similarly, $-h^-(R,\rho)$ is denoted to be the minimum $d$th-dimensional coordinate of the sites which have been exited by the particles at least twice.

Firstly we prove the dimension $d=3$ case. When initial rotor configuration is $\rho_0$, $n$ particles perform rotor-router aggregation. The sites occupied by particles are denoted as $A_n$. By Lemma 4.4, there exists a constant $K$ such that
\[A_n \subseteq B_{Kn^{\frac{1}{3}}\log n}.\]
Denote $\rho^\prime$ to be the rotor configuration after $n$ particles starting from the origin finishes their rotor-router aggregation. If we put a particle on each site of $B_{Kn^{\frac{1}{3}}\log n}$ without change the rotor configuration $\rho^ \prime$, denote the corresponding particle distribution to be
\[G: \mathbb{Z}^d \rightarrow \mathbb{N}, G(x)=1_\{x \in B_{Kn^{\frac{1}{3}}\log n}\}.\]

Let $(n1_{\{x=0\}}, \rho_0)$ perform rotor walk until escaping to infinity, we can deduce like the beginning of Theorem 1.1. We obtain that for $x=(x_1, x_2, x_3)$ that has been visited by the particles, $\forall 1 \leq i \leq 2, |x_i| \leq n$. Also, $h_n^+ \leq n$ and $h_n^- \leq n$. If we let $(1_{\{x \in A_n\}}, \rho^\prime)$ perform rotor walk until escaping to infinity, similarly, for $x=(x_1, x_2, x_3)$ that has been visited in the process, we could choose a constant $C^\prime$, $\forall 1 \leq i \leq 2, |x_i|<CK^3n(\log n)^3+Kn^{\frac{1}{3}}\log n< C^ \prime n(\log n)^3$. Also, similar to the proof of Lemma 2.2, $h^+(1_{\{x \in A_n\}}, \rho^\prime)< CK^3n(\log n)^3+Kn^{\frac{1}{3}}\log n< C^ \prime n(\log n)^3$ and $h^-(1_{\{x \in A_n\}}, \rho^\prime)<C^ \prime n(\log n)^3$.

If $h_n^+ \leq Kn^{\frac{1}{3}}\log n$, the conclusion for $d=3$ case follows. Else, we consider $n$ particles perform rotor walk from 0 until hitting
\[D_1:=\{{x:x_3=h_n^+}\} \bigcup \{{x:x_3=-C^ \prime n(\log n)^3}\} \bigcup (\bigcup_{i=1}^2\{x_i= \pm C^ \prime n(\log n)^3\}).\]
There are at least two particles staying on a single site of hyperplane $\{{x:x_3=h_n^+}\}$.
Another way to realize that is to let the $n$ particles on the origin 0 perform rotor-router aggregation. And then continue to let $(1_{\{x \in A_n\}},\rho^ \prime)$ perform rotor walk until hitting $D_1$. By the definition of $h^+(1_{\{x \in A_n\}},\rho^ \prime)$ and the abelian property, we obtain $h_n^+ \leq h^+(1_{\{x \in A_n\}},\rho^ \prime)$.

Let $(G,\rho^ \prime)$ perform rotor walk until hitting
\begin{align*}
D_2:=  & \{{x:x_3=h^+(1_{\{x \in A_n\}},\rho^ \prime)}\} \bigcup \{{x:x_3=-C^ \prime n(\log n)^3}\}\\
 & \bigcup (\bigcup_{i=1}^2\{x_i= \pm C^ \prime n(\log n)^3\}).
\end{align*}
Another way to realize this is to let $(1_{\{x \in A_n\}} , \rho^ \prime)$ perform rotor walk until hitting $D_2$. There are at least two particles staying on a single site of hyperplane $\{{x:x_3=h^+(1_{\{x \in A_n\}}}, \rho^\prime\}$. And then let the particles in $B_{Kn^{\frac{1}{3}}\log n} \backslash A_n$ continue to perform rotor walk until hitting $D_2$. By the abelian property we know $h^+(1_{\{x \in A_n\}},\rho^ \prime) \leq h^+(G,\rho^ \prime)$.

Now consider the rotor walk state $(6G,\rho_0)$. Let $(6G,\rho_0)$ perform rotor walk until hitting
\[D_3:=\{{x:x_3=h^+(G,\rho^ \prime)}\} \bigcup \{{x:x_3=-C^ \prime n(\log n)^3}\} \bigcup (\bigcup_{i=1}^2\{x_i= \pm C^ \prime n(\log n)^3\}).\]
We could perform the rotor walk in another way. $\forall y \in B_{Kn^{\frac{1}{3}}\log n}$, we do the follow operations to $y$. If $m(y)^{(k)}(\rho_0(y))=\rho^ \prime (y)$, let $k$ particles on $y$ perform one-step rotor walk. When these operations finish, denote the rotor walk state to be $(U,\rho^ \prime)$. Obviously we have $U(y) \geq 1, \forall y \in B_{Kn^{\frac{1}{3}}\log n} $. Similar to the arguments above, let $(G,\rho^ \prime)$ perform rotor walk until hitting $D_3$. Then let the rest particles in $B_{Kn^{\frac{1}{3}}\log n}$ continue to perform rotor walk until hitting $D_3$. By the abelian property,  $h^+(G,\rho^ \prime) \leq h^+(6G,\rho_0)$.

Let $(6G,\rho_0)$ perform rotor walk until escaping to infinity. The same as the previous case, we know $\forall 1 \leq i \leq 2, |x_i|<6CK^3n(\log n)^3+Kn^{\frac{1}{3}}\log n< 6C^ \prime n(\log n)^3$, where $x=(x_1, x_2, x_3)$ is a site which has been visited during the process.

Next we would construct rotor walk on $\mathbb{Z}/ (12C^ \prime n(\log n)^3\mathbb{Z}) \times \mathbb{Z}^2$. Let $\rho_0^*=+e_3$ if $x_3 \geq 0$, $\rho_0^*=-e_3$ if $x_3 < 0$.
\[ R(x):=\begin{cases}
1 & x \in I:=\{(x_1,x_2,x_3) \in \mathbb{Z}/ (12C^ \prime n(\log n)^3\mathbb{Z}) \times \mathbb{Z}^2: x_2^2+x_3^2< K^2n^{\frac{2}{3}}(\log n)^2\}\\
0 & x \in (\mathbb{Z}/ (12C^ \prime n(\log n)^3\mathbb{Z}) \times \mathbb{Z}^2)\backslash I
\end{cases} .\]

Denote $h_0^+(. , .), h_0^-(. , .)$, $G^*$ to be the corresponding definition of $h^+(. , .), h^-(. , .)$, $G$ in \\
$\mathbb{Z}/ (12C^ \prime n(\log n)^3\mathbb{Z}) \times \mathbb{Z}^2$. Similar to the above arguments, we know if let $(6R,\rho_0^*)$ performs rotor walk until escaping to infinity. For a site $x$ that has been visited during the process, there exists a constant $C^ {\prime\prime}$ such that $h_0^+(6R, \rho_0^*) < C^ {\prime\prime} n^{\frac{5}{3}}(\log n)^5, h_0^-(6R, \rho_0^*) > -C^ {\prime\prime} n^{\frac{5}{3}}(\log n)^5, |x_2|< C^ {\prime\prime} n^{\frac{5}{3}}(\log n)^5$. Let $(6R,\rho_0^*)$ perform rotor walk until hitting
\begin{align*}
D_4: & = \{x \in \mathbb{Z}/ (12C^ \prime n(\log n)^3\mathbb{Z}) \times \mathbb{Z}^2: x_3= h_0^+(6G^*,\rho_0^*)\}\\
 & \bigcup \{x \in \mathbb{Z}/ (12C^ \prime n(\log n)^3\mathbb{Z}) \times \mathbb{Z}^2: x_3= -C^ {\prime\prime} n^{\frac{5}{3}}(\log n)^5\} \bigcup
 \{|x_2|=C^ {\prime\prime} n^{\frac{5}{3}}(\log n)^5\}.
\end{align*}
Another way is to let $(6G^*, \rho_0^*)$ perform rotor walk until hitting $D_4$ first and then let the rest of the particles continue to perform rotor walk. By abelian property, we obtain $h_0^+(6G^*,\rho_0^*) \leq h_0^+(6R,\rho_0^*)$.

By the method we choose $C^ \prime$, $h_0^+(6G^*,\rho_0^*)=h^+(6G,\rho_0)$.

Now let $(6R,\rho_0^*)$ perform rotor walk until hitting
\begin{align*}
D_5: & = \{x \in \mathbb{Z}/ (12C^ \prime n(\log n)^3\mathbb{Z}) \times \mathbb{Z}^2: x_3= h_0^+(6R,\rho_0^*)\}\\
 & \bigcup \{x \in \mathbb{Z}/ (12C^ \prime n(\log n)^3\mathbb{Z}) \times \mathbb{Z}^2: x_3= -C^ {\prime\prime} n^{\frac{5}{3}}(\log n)^5\} \bigcup
 \{|x_2|=C^ {\prime\prime} n^{\frac{5}{3}}(\log n)^5\}.
\end{align*}
Another way is to let $\{(x,x_2,x_3):x \in \mathbb{Z}/ (2C^ \prime n(\log n)^3\mathbb{Z})\}$ perform one-step rotor walk simultaneously and when we regard $\{(x,x_2,x_3):x \in \mathbb{Z}/ (2C^ \prime n(\log n)^3\mathbb{Z})\}$ as a 2-dimensional site $(x_2,x_3)$, it is the same with a one-step rotor walk of $(x_2, x_3)$ in $\mathbb{Z}^2$. We stop when particles hit $D_5$.
We know that the previous process is the same as $(61_{\{|x|<Kn^{\frac{1}{3}}\log n\}},\rho_0)$ performing rotor walk in $\mathbb{Z}^2$ until hitting
\begin{align*}
D_6: & =  \{x \in  \mathbb{Z}^2: x_3= h_0^+(6R,\rho_0^*)\}
  \bigcup \{x \in \mathbb{Z}^2: x_3= -C^ {\prime\prime} n^{\frac{5}{3}}(\log n)^5\} \\
  & \bigcup \{|x_2|=C^ {\prime\prime} n^{\frac{5}{3}}(\log n)^5\}.
\end{align*}
Similar to Lemma 2.2, we know there exists a constant such that
\[h_0^+(6R,\rho_0^*) \leq h^{2+}(61_{\{|x|<Kn^{\frac{1}{3}}\log n\}},\rho_0) \leq C(Kn^{\frac{1}{3}}\log n)^2+Kn^{\frac{1}{3}}\log n \leq Cn^{\frac{2}{3}}(\log n)^2.\]

Thus $h_n^+ \leq Cn^{\frac{2}{3}}(\log n)^2$. The same method could be used to prove $h_n^- \leq Cn^{\frac{2}{3}}(\log n)^2$.

When $d>3$ and the initial configuration is $\rho_0$, $n$ particles in turn perform rotor walk. For particle $x$ that has been visited during the process, we have $|x_1|<n+1$. Now construct rotor walk in $\mathbb{Z}/(2(n+1)\mathbb{Z})\times \mathbb{Z}^{d-1}$. $\forall x \in \mathbb{Z}/(2(n+1)\mathbb{Z})\times \mathbb{Z}^{d-1}$, denote $\rho_0^*(x)=+e_d$ if $x_d \geq 0$, $\rho_0^*(x)=-e_d$ if $x_d <0$. Let
\[S(x):=\begin{cases}
1 & x \in L:=\{(x_1,0,\dots,0): x_1 \in \mathbb{Z}/(2(n+1)\mathbb{Z} )\}\\
0 & x \in (\mathbb{Z}/(2(n+1)\mathbb{Z})\times \mathbb{Z}^{d-1})\backslash L
\end{cases} .\]

Denote $h_0^+(.,.), h_0^-(.,.)$ to be the corresponding definition in $\mathbb{Z}/(2(n+1)\mathbb{Z})\times \mathbb{Z}^{d-1}$. Let $(nS,\rho_0^*)$ perform rotor walk until escaping to infinity. For site $x$ that has been visited by these particles, we know $\forall 1 \leq i \leq d-1$, $|x_i|<Cn^2$ and $h_0^+(nS,\rho_0^*)<Cn^2$, $h_0^+(nS,\rho_0^*)>-Cn^2$ where $C$ is a constant. In $\mathbb{Z}/(2(n+1)\mathbb{Z})\times \mathbb{Z}^{d-1}$ $(nS,\rho_0^*)$ perform rotor walk until hitting
\begin{align*}
D_7: & =\{x \in \mathbb{Z}/(2(n+1)\mathbb{Z})\times \mathbb{Z}^{d-1} : x_d=h_0^+(n1_{\{x=0\}},\rho_0^*)\}\\
& \bigcup \{x \in \mathbb{Z}/(2(n+1)\mathbb{Z})\times \mathbb{Z}^{d-1} : x_d=-Cn^2.\} \bigcup (\bigcup_{i=2}^{d-1}\{|x_i|=Cn^2\}).
\end{align*}
Another way is to let $(n1_{\{x=0\}},\rho_0^*)$ perform rotor walk until hitting $D_7$ first and then let the rest of the particles perform rotor walk until hitting $D_7$. By abelian property we know $h_0^+(n1_{\{x=0\}},\rho_0^*) \leq h_0^+(nS,\rho_0^*) $¡£

The same as the previous case we know $h_0^+(nS,\rho_0^*) \leq h_n^{(d-1)+}$ and $h_0^+(n1_{\{x=0\}},\rho_0^*) = h_n^{d+}$. Hence $h_n^{d+} \leq h_n^{(d-1)+}$.

Our assumption (1) for cyclical order is that $(\eta(e_{d-1})-\eta(-e_d))(\eta(-e_{d-1})-\eta(-e_d))<0$. Notice the previous $d=3$ case need the two 2th-dimensional directions seperate $e_d$ and $-e_d$ like Lemma 2.1 and Lemma 2.2. So we can use the above method $(d-3)$ times(curl the first $d-3$ dimension coordinates) and thus we know when $d \geq 3$, we have $h_n^+ \leq Cn^{\frac{2}{3}}(\log n)^2$ where $C$ is a constant depending only on dimension $d$. The same method could be used to prove $h_n^- \leq Cn^{\frac{2}{3}}(\log n)^2$.
\end{proof}

\section{Estimates for breadth}

This section we will give an estimate for breadth under assumption (1) we mentioned in Introduction. The intuition of the estimates for breadth is that when we regard the lattice line $l_x={\{y: y=x+ke_d, k \in \mathbb{Z}\}}$ in $\mathbb{Z}^d$ as a single site in $\mathbb{Z}^{d-1}$, $n$ particles in turn performing rotor walk in $\mathbb{Z}^d$ from the origin 0 until escaping to infinity is similar to $n$ particles' rotor-router aggregation. Because once a particle reaches $l_x$, it would escapes directly to infinity follow the direction either $e_d$ or $-e_d$ and we could regard it as getting trapped in a site in $\mathbb{Z}^{d-1}$. The following proof is similar to the outer estimate for rotor-router aggregation in L.Levine,Y.Peres \cite{6}.

For $x=(x_1, \dots, x_d) \in \mathbb{Z}^d$, let $\tilde{x}=(x_1,x_2,\dots ,x_{d-1}), Cylinder_r=\{x \in \mathbb{Z}^d: |\tilde{x}|<r\}, T_r=\{x \in \mathbb{Z}^d:r \leq |\tilde{x}| <r+1\}$. When the initial rotor configuration is $\rho_0$, $n$ particles in turn perform rotor walk from the origin 0 in $\mathbb{Z}^d$. If these particles stop until hitting $T_r \bigcup \{\infty\}$, denote the number of the particles on $T_r$ to be $N_r^*$. For $x \in T_r$, denote the number of the particles on $x$ to be $s(x)$ and on $l_{\tilde {x}}=\{(\tilde{x},y):y \in \mathbb{Z}\}$ to be $\tilde s(\tilde{x})$. And then we let the $N_r^*$ particles on $T_r$ continue to perform rotor walk until hitting $T_{r+h}\bigcup \{\infty\}$. For $y \in T_{r+h}$, denote $H_r^*(s,\tilde{y})$ to be the number of particles on $l_{\tilde{y}}$ where $s$ represents the distribution of the $N_\rho^*$ particles on $T_r$. By the abelian property We know the number of the particles on $T_{r+h}$ is $N_{r+h}^*$.

Denote $B_{r}^{(d-1)}$ and $S_{r}^{(d-1)}$ to be the $d-1$ dimensional Euclidian lattice ball and $d-1$ dimensional Euclidian lattice sphere. $y \in S_{r+h}^{(d-1)}$ and for all $x \in S_r^{(d-1)}$, $p(x)$ particles stay on $x$. These particles located in $S_r^{(d-1)}$ begin to perform independent random walks until hitting $S_{r+h}^{(d-1)}$. Denote $H_w(p,y)$ to be the expecting number of particles stopping on $y$. Also, we denote $H_w(1_{x},y)$ to be $H(x)$.

\begin{lemma}
When the initial rotor configuration is $\rho_0$, $n$ particles in $\mathbb{Z}^d$ perform rotor walk starting from 0 until hitting $T_r \bigcup \{\infty\}$. $N_r^*$ particles stay on $T_r$ and $s(x)$ particles stay on $x$. Let these $N_r^*$ particles continue to perform rotor walk until hitting $T_{r+h} \bigcup \{\infty\}$. Then there exists a constant $C$ depending only on dimension $d$ such that $\forall n,r,h \geq 1, \forall y \in T_{r+h}$£¬
\[H_r^*(s,\tilde{y}) \leq H_w(\tilde {s},\tilde{y})+Cn^{\frac{2}{3}}(\log n)^2\sum_{\tilde{u} \in B_{r+h}^{(d-1)}}\sum_{\tilde{v} \sim \tilde{u}}|H(\tilde{u})-H(\tilde{v})|\]
\end{lemma}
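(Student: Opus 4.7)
The plan is to transfer the question to the $(d-1)$-dimensional lattice by working on a vertically-periodic cylinder $\Gamma := (B_{r+h}^{(d-1)} \cap \mathbb{Z}^{d-1}) \times (\mathbb{Z}/(2H\mathbb{Z}))$, where $H := 2Cn^{2/3}(\log n)^2$ with $C$ the constant from Lemma 5.1. On $\Gamma$ I set $Z := S_{r+h}^{(d-1)} \times (\mathbb{Z}/(2H\mathbb{Z}))$ (the vertical sides) and $Y := \{\tilde{y}\} \times (\mathbb{Z}/(2H\mathbb{Z}))$ (the lift of $l_{\tilde{y}}$). I would then apply Lemma 4.1 (Holroyd--Propp) on $\Gamma$ to compare the rotor walk to random walk, and exploit the vertical translation symmetry of $\Gamma$ to collapse everything onto $\mathbb{Z}^{d-1}$ and read off the stated bound.

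First I would establish the coupling inequality $H_r^*(s,\tilde{y}) \leq H_r^\Gamma(s,Y)$. By the abelian property, the two-phase rotor walk from the statement coincides with the one-phase process of $n$ particles leaving the origin and stopping at $T_{r+h} \cup \{\infty\}$; since the twice-visited set of this subprocess is contained in that of the escape-to-infinity process, Lemma 5.1 confines the entire wandering phase to $\{|x_d| < H/2\}$. Performing the same rotor walk on $\Gamma$, the wandering is faithfully simulated, so every particle that in $\mathbb{Z}^d$ stops on $l_{\tilde{y}} \cap T_{r+h}$ also stops on $Y$ in $\Gamma$; particles that in $\mathbb{Z}^d$ escape vertically to $\infty$ instead wrap around in $\Gamma$ and are eventually absorbed on $Z$, which can only increase the $Y$-count.

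Next I would apply Lemma 4.1 on $\Gamma$, which gives
\[|H_r^\Gamma(s,Y) - H_w^\Gamma(s,Y)| \leq \sum_{u\in\Gamma\setminus Z}\sum_{v\sim u}|H_\Gamma(u)-H_\Gamma(v)|,\]
where $H_\Gamma(u) := \mathbb{P}_u(X_T\in Y)$. By the vertical translation symmetry of $\Gamma$, $H_\Gamma(u)$ depends only on $\tilde{u}$; projecting the $d$-dimensional simple random walk to the lazy $(d-1)$-dimensional walk on $B_{r+h}^{(d-1)}$ (vertical moves become self-loops, which do not affect hitting distributions) identifies $H_\Gamma(u) = H(\tilde{u})$ and $H_w^\Gamma(s,Y) = H_w(\tilde{s},\tilde{y})$. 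Edges $(u,v)$ in direction $\pm e_d$ contribute zero to the error sum since $H_\Gamma$ is vertically invariant, while each horizontal edge $\tilde{u}\sim\tilde{v}$ in $\mathbb{Z}^{d-1}$ is lifted to $2H$ copies in $\Gamma$, collapsing the error to
\[2H \sum_{\tilde{u}\in B_{r+h}^{(d-1)}}\sum_{\tilde{v}\sim\tilde{u}}|H(\tilde{u})-H(\tilde{v})| \leq C' n^{2/3}(\log n)^2 \sum_{\tilde{u}\in B_{r+h}^{(d-1)}}\sum_{\tilde{v}\sim\tilde{u}}|H(\tilde{u})-H(\tilde{v})|\]
with $C'=4C$. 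Combining with the coupling inequality yields the claim.

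The main obstacle I anticipate is making the coupling between the $\mathbb{Z}^d$ rotor walk and the $\Gamma$ rotor walk fully rigorous, particularly for the particles that escape vertically. In the periodic picture such particles may wrap around and revisit sites that in $\mathbb{Z}^d$ would have lain on the opposite escape region, perturbing rotor configurations at those sites; arguing that this perturbation only swings the final $Y$-count upward requires some care with the order in which rotor steps are carried out (using the abelian property to delay all escape-phase steps until after all wandering-phase steps have finished). A safer fall-back, should the periodic coupling prove too fragile, is to work on the truncated cylinder $B_{r+h}^{(d-1)} \times [-H,H]$ with absorbing top and bottom caps: the coupling is then exact, giving $H_r^*(s,\tilde{y}) = H_r^\Gamma(s,Y)$ and $H_w^\Gamma(s,Y) \leq H_w(\tilde{s},\tilde{y})$, at the price of a small correction to $|H_\Gamma(u)-H(\tilde{u})|$ (of order $e^{-\Theta(H/(r+h))}$ for $|u_d|\leq H/2$) that is negligible against the Holroyd--Propp error.
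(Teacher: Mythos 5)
Your overall strategy --- use Lemma 5.1 to confine the non-trivial part of the walk to the slab $|x_d|\leq Cn^{2/3}(\log n)^2$, project to $\mathbb{Z}^{d-1}$, and run a Holroyd--Propp comparison whose error picks up one factor of the slab height --- is exactly the paper's idea, and your accounting of the error term ($2H$ copies of each horizontal edge, vertical edges contributing nothing) reproduces the right bound. The gap is in how you invoke Lemma 4.1. Neither of your two graphs lets you apply it as a black box. On the periodic cylinder the rotor dynamics are genuinely different from those in $\mathbb{Z}^d$: a particle that would escape vertically instead wraps around, turns rotors at sites it never visits in the true process, and thereby changes the routes of all later particles. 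There is no abelian-property argument that makes this a monotone perturbation of the $Y$-count (the abelian property reorders moves of a fixed process on a fixed graph; it does not compare processes on different graphs), so the inequality $H_r^*(s,\tilde y)\leq H_r^{\Gamma}(s,Y)$ is unsupported, and this is the crux rather than a technicality. On the truncated cylinder with absorbing caps the coupling is exact, but then $H_\Gamma(u)=\mathbb{P}_u(X_T\in Y)$ is no longer vertically invariant: vertical edges contribute to the Lemma 4.1 error sum, the identification $H_\Gamma(u)=H(\tilde u)$ fails, and your claimed correction $e^{-\Theta(H/(r+h))}$ is not small in the regime that actually occurs downstream --- in the proof of Lemma 3.1 the radius runs up to $s\asymp n^{5/6}(\log n)^{3/2}$, which dwarfs $H\asymp n^{2/3}(\log n)^2$, so a walker is quite likely to hit a cap before the sides. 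You would need a fresh gradient estimate for the cap-absorbing harmonic function, which Lemma 4.3 does not supply.

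The paper's resolution is to not use the harmonic function of the modified graph at all, but to rerun the Holroyd--Propp weight argument by hand on the truncated region with the function $H(\tilde{\,\cdot\,})$ itself. Since $H(\tilde x)$ is harmonic for $d$-dimensional simple random walk inside the cylinder (it is independent of $x_d$ and $(d-1)$-harmonic in $\tilde x$), one can assign each particle the weight $H(\tilde x)$ and define edge weights $w$ from the rotor order so that particle weight plus edge weight is exactly conserved under every rotor step. The total edge-weight change over the whole process is then bounded by the slab height times $\sum_{\tilde u}\sum_{\tilde v\sim\tilde u}|H(\tilde u)-H(\tilde v)|$, which is your error term; and the particles that exit through the caps (i.e.\ escape to infinity) carry off the \emph{nonnegative} weight $\sum_x U_f(x)H(\tilde x)$, which is simply discarded to yield the one-sided inequality of the lemma. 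That last step is exactly the mechanism your proposal is missing: the inequality is one-sided not because of a coupling monotonicity, but because the weight lost to vertical escape has a sign.
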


\begin{proof}
By Lemma 5.1, we know there exists a constant $C, h_n^+<Cn^{\frac{2}{3}}(\log n)^2,h_n^-<Cn^{\frac{2}{3}}(\log n)^2$.
Let
\[L_n(r+h)=Cylinder_{r+h}\bigcap \{x: |x_d| \leq Cn^{\frac{2}{3}}(\log n)^2\}.\]
When the initial configuration is $\rho_0$, $n$ particles perform rotor walk until hitting
\[T_r\bigcup \{x:|x_d|=Cn^{\frac{2}{3}}(\log n)^2\}.\]
Denote the rotor configuration in $L_n(r+h)$ to be $\zeta_0$ when the above rotor walk finishes.

Next we offer weight to each edge of $L_n(r+h)$. $\forall z \in L_n(r+h)$, let $w(z,z+\zeta_0(z))=0$ and
\[w(z,z+m^{(k+1)}(\zeta_0(z)))=H(\tilde{z})-H(\widetilde{z+m^{(k)}(\zeta_0(z))})+w(z,z+m^{(k)}(\zeta_0(z))).\]
By $\sum_{k=1}^{2d}H(\widetilde{z+m^{(k)}(\zeta_0(z))})=2dH(\tilde{z})$ we know the definition of the edge weight is well-defined. For all $x \in T_r$, we give weight $H(\tilde{x})$ to every particle located in $l_{\tilde{x}}$.

Now we let the $N_r^*$ particles located in $T_r$ begin to perform rotor walk until hitting
\[D_8:=T_{r+h} \bigcup \{x:|x_d|=Cn^{\frac{2}{3}}(\log n)^2\}.\]
Notice that when particles hit $\{x:|x_d|=Cn^{\frac{2}{3}}(\log n)^2\}$, it means that the particles would escape to $\infty$ directly without influencing the rotor configuration in $L_n(r+h)$. Denote $(U,\zeta)$ to be a rotor walk state during the above process where $U: L_n(r+h) \rightarrow \mathbb{N},U(x)=k$, meaning that $k$ particles stays on $x$. By the definition of the edge weight and the particle weight, we know during the process of rotor walk
\[\sum_{x \in L_n(r,r+h)}U(x)H(\tilde{x})+\sum_{x \in L_n(r,r+h)}w(x,x+\zeta(x)) \equiv const.\]
In the beginning the sum of the particle weights is $H_w(\tilde{s},\tilde{y})$. After all these particles hit $D_8$, the sum of the particle weights is
\[H_r^*(s,\tilde{y})+\sum_{x \in \{x:|x_d|=Cn^{\frac{2}{3}}(\log n)^2\}}U_f(x)H(\tilde{x})\]
where $U_f$ is the final rotor walk distribution. The difference of the two terms is the change of the edge weights, which is controlled by $2Cn^{\frac{2}{3}}(\log n)^2\sum_{\tilde{u} \in B_{r+h}^{(d-1)}}\sum_{\tilde{v} \sim \tilde{u}}|H(\tilde{u})-H(\tilde{v})|$. Note $U_f \geq 0, H(\tilde{.}) \geq 0$, hence the desired result follows.
\end{proof}

Next we give a proof of Lemma 3.1.

\begin{proof}
By Lemma 4.2 we know $H_w(s,\tilde{y}) \leq \frac{JN_r^*}{h^{d-2}}$ and by Lemma 4.3 we obtain $\sum_{\tilde{u} \in B_{r+h}^{(d-1)}}\sum_{\tilde{v} \sim \tilde{u}}|H(\tilde{u})-H(\tilde{v})| \leq J^\prime \log (r+h)$. Hence
\[H_r^*(s,\tilde{y}) \leq \frac{JN_r^*}{h^{d-2}}+Cn^{\frac{2}{3}}(\log n)^2\log (r+h).\]
When the initial rotor configuration is $\rho_0$, $n$ particles in turn perform rotor walk until escaping to infinity. Denote $M_{r+h}^*$ to be the number of the lattice line $l_{\tilde{y}}$ visited during the above process in $T_{r+h}$. By the abelian property of rotor walk
\[N_{r+h}^*=\sum_{\tilde{y} \in B_{r+h}^{(d-1)}}H_r^*(s,\tilde{y}) \leq M_{r+h}^*(\frac{JN_r^*}{h^{d-2}}+Cn^{\frac{2}{3}}(\log n)^2\log (r+h)).\]
Hence
\[M_{r+h}^* \geq \frac{N_{r+h}^*}{\frac{JN_r^*}{h^{d-2}}+Cn^{\frac{2}{3}}(\log n)^2\log (r+h)}.\]

Let $s=\min \{t \in \mathbb{N}: N_t^* \leq t\log t\}, \rho(0)=0,\rho(i+1)= \min \{t>\rho(i): N_{\rho(i+1)}^* \leq \frac{N_{\rho(i)}^*}{2}\},k= \max \{r \in \mathbb{N}:\rho(r)<s\}$. The same as the proof of rotor-router aggregation, $\exists a>0$ such that $k< a\log n$. Let $\rho(k+1)=s-1$ and we know
\[\sum_{h=1}^{\rho(i+1)-\rho(i)-1}M_{r+h}^* \geq \frac{(\rho(i+1)-\rho(i))^2}{J+Cn^{\frac{2}{3}}(\log n)^2}.\]
Let $x_i=\rho(i+1)-\rho(i)$. By Lemma 2.1, we know $\sum_{h \geq 0}M_h^* \leq n$. Thus
\[\sum_{i=1}^kx_i^2 \leq (J+Cn^{\frac{2}{3}}(\log n)^2)n .\]
Hence
\[s-1=\sum_{i=1}^kx_i \leq k^{\frac{1}{2}}(\sum_{i=1}^kx_i^2)^{\frac{1}{2}} \leq C (\log n)^{\frac{3}{2}}n^{\frac{5}{6}}.\]
For a site $x=(x_1, \dots, x_{d-1}, x_d)$
that has been visited during the process,
\[|x_i| \leq s+s\log s \leq C(\log n)^{\frac{5}{2}}n^{\frac{5}{6}}\]
where $1 \leq i \leq d-1$.
Also, we know $h_n^+ \leq Cn^{\frac{2}{3}}(\log n)^2, h_n^- \leq Cn^{\frac{2}{3}}(\log n)^2$. Thus there exists a constant $K_d$ such that
$R(n)=K_d(\log n)^{\frac{5}{2}}n^{\frac{5}{6}}$. We have
\[\bigcup_{-h_n^- \leq r \leq h_n^+}P_n(r) \subseteq B_{R(n)}\]
and
\[\lim_{n \to \infty}\frac{R(n)}{n}=0.\]
\end{proof}

\begin{remark}
Another way to give estimate for $|x_i|$ where $1 \leq i \leq d-1$ is based on a modification of Lemma 2.1. We could prove even when the rotor cyclical order $m$ is arbitrary Lemma 2.1 is valid and the structure of rotor walk is the same. The definitions of $h_n^+$ and $h_n^-$ still make sense. Let $n$ particles in turn perform rotor walk until escaping to infinity and define $x_i^+(n)$, $-x_i^-(n)$ to be the maximal and minimum $i$th dimensional coordinate of a site $x$, respectively. Although we cannot know whether $h_n^+ \lesssim n$ and $h_n^- \lesssim n$ is sill valid, $x_i^+(n) \leq n$ and $x_i^-(n) \leq n$ for all $1 \leq i \leq d-1$ still remain correct. Next we could use the method in Lemma 5.1. We need only to extend the definitions of $x_i^+(n)$, $x_i^-(n)$ to $x_i^+(R,\rho)$ and $x_i^-(R,\rho)$, just as an extension of $h_n^+$ to $h^+(R, \rho)$ in Lemma 5.1. In $\mathbb{Z}^3$ if we want to estimate $x_1^+(n)$, similar to Lemma 5.1 we need to curl the 2th-dimensional direction. Note in the proof in order to use the abelian property we need to confine the rotor walk into a box but we do not need to know the exact length of the edges of the box except the 1st-dimensional direction edges. Hence we could obtain $x_1^+(n) \leq Cn^{\frac{2}{3}}(\log n)^2$ and $x_1^-(n) \leq Cn^{\frac{2}{3}}(\log n)^2$. Everything goes through and then we know $x_i^+(n) \leq Cn^{\frac{2}{3}}(\log n)^2$ and $x_i^-(n) \leq Cn^{\frac{2}{3}}(\log n)^2$ for $1 \leq i \leq d-1$. This is even stronger than the above estimate. As a result we come to the conclusion of Lemma 3.1.
\end{remark}

\section*{Acknowledgement}

The author would like to thank Prof. Jiangang Ying for his encouragements and supports.

\end{document}